\documentclass[reqno,11pt,a4paper]{amsart}

\usepackage{amsmath, amsthm, amsfonts, amssymb, color}

\usepackage{mathtools}
\mathtoolsset{showonlyrefs}

\usepackage[applemac]{inputenc}

\usepackage[all]{xy}

\theoremstyle{plain}
\newtheorem{theorem}{Theorem}
\newtheorem{proposition}[theorem]{Proposition}
\newtheorem{lemma}[theorem]{Lemma}
\newtheorem{corollary}[theorem]{Corollary}

\theoremstyle{definition}

\theoremstyle{remark}
\newtheorem{remark}[theorem]{Remark}

\DeclareMathOperator{\im}{Im}
\def\Z{\mathbb{Z}}	
	
\def\R{\mathbb{R}}	
\renewcommand{\leq}{\leqslant} 		
\renewcommand{\geq}{\geqslant}

% new
\let\ker\relax
\DeclareMathOperator{\ker}{Ker}
\def\cA{\mathcal{A}}
\def\hcA{\hat{\cA}}

\def\hcB{\hat{\mathcal{B}}}

\def\hcC{\hat{\mathcal{C}}}
\def\cF{\mathcal{F}}
\def\hcF{\hat{\cF}}

\def\hcM{\hat{\cM}}

\def\d{\partial}

\def\Dla{D_\lambda}

\begin{document}

\title[Deformations of semisimple Poisson pencils]{Deformations of semisimple Poisson pencils of hydrodynamic type  are unobstructed}

\author{Guido Carlet}

\author{Hessel Posthuma}

\author{Sergey Shadrin}

\address{Korteweg-de Vries Instituut voor Wiskunde, 
Universiteit van Amsterdam, Postbus 94248,
1090GE Amsterdam, Nederland}

\email{g.carlet@uva.nl, h.b.posthuma@uva.nl, s.shadrin@uva.nl}

\begin{abstract}
We prove that the  bihamiltonian cohomology of a semisimple pencil of Poisson brackets of hydrodynamic type vanishes for almost all degrees.
This implies the existence of a full dispersive deformation of a semisimple bihamiltonian structure of hydrodynamic type starting from any infinitesimal deformation. 
\end{abstract}

\maketitle

\tableofcontents

\raggedbottom

\section{Introduction}

\subsection{Basic setup from the theory of integrable hierarchies}

Consider a system of evolutionary PDEs with one spatial variable $x$ and $n$ dependent variables of the form
$$
\frac{\d u^i}{\d t} = A^i_j(u) u^{j}_x + \epsilon \left( B^i_j(u) u^j_{xx} + C^i_{jk} u^j_x u^k_x\right) + O(\epsilon^2).
$$
(Here, and in the following we use the summation convention.)
The right hand side of this equation is represented as a formal power series in $\epsilon$, where the 
coefficient of $\epsilon^k$ is a homogeneous differential polynomial, i.e. an homogeneous polynomial 
in $u^{i,d}:=\d_x^du^i$, $i=1,\dots,n$, $d=1,\dots,k$, $\deg u^{i,d}=d$, of degree $k+1$ whose 
coefficients are smooth functions of coordinates $u^1,\dots,u^n$ on some domain $U\subset 
\mathbb{R}^n$. We can think of the $u^i(x)$ as ($\epsilon$-power series of) smooth functions of $x\in 
S^1$ or Schwarzian functions of $x\in \R$.

On of the possible ways to define and study an integrable hierarchy of partial differential equations of this type uses so-called bihamiltonian structures with hydrodynamic limit. 

A bihamiltonian structure with hydrodynamic limit is given by: a pencil of two compatible Poisson structures on the space of local functionals of  the form
$$
\{u^i(x),u^j(y)\}_a=\left(g^{ij}_a(u)\d_x+\Gamma^{ij}_{k,a}(u) u^k_x \right) \delta(x-y) +O(\epsilon) , \quad a=1,2
$$
where the leading order is given by two Poisson brackets of hydrodynamic type, and the terms of higher order in $\epsilon$ are homogeneous differential operators acting on $\delta(x-y)$, and their coefficients are homogeneous differential polynomials in the dependent variables $u^1, \dots , u^n$;  a system of Hamiltonians of the form
$$
H_a[u]=\int dx\cdot \left( h_a(u)  +O(\epsilon) \right), \quad a=1,2
$$
with higher order terms in $\epsilon$ given by homogeneous differential polynomials in $u^1, \dots  , u^n$.
 
The evolutionary PDE above can be written as an Hamiltonian flow w.r.t. both Poisson structures  
$$
\frac{\d u^i}{\d t} = \{ u^i(x), H_a\}_a  
$$
for $a=1,2$.

The natural equivalence relation on these systems, and in particular on the pencils of Poisson structures with hydrodynamic limit, is given by the so-called Miura transformations, which are transformations of the dependent variables of the form
\begin{equation}
\label{Miura1}
u^i\mapsto v^i(u)+O(\epsilon),
\end{equation}
where higher order terms in $\epsilon$ are homogeneous differential polynomials in $u^1, \dots , u^n$, and 
the leading term is a diffeomorphism. 

%In this context, an important problem is to classify the pencils of Poisson structures with hydrodynamic limit, up to the equivalence given by Miura transformations. 

In this context, an important problem is to classify the Poisson structures with hydrodynamic limit, and respectively the pencils of such Poisson structures, up to the equivalence given by Miura transformations. In the case of a single Poisson structure a triviality theorem~\cite{g02, dz01, dms05, lz11} holds: any Poisson structure with hydrodynamic limit is Miura equivalent to its leading order, i.e., to a Poisson structure of hydrodynamic type. 

The problem of classification of Poisson pencils is more complex. In the scalar $(n=1)$ case a complete solution of this classification problem has been obtained, see~\cite{l02, lz06, al11, lz13, cps14, cps14-2}. 

We are going to consider the general $n>0$ case (see~\cite{b08, dlz06, dz01, lz05, lz11}), where it is convenient to make the assumption that the pencil of Poisson brackets of hydrodynamic type that we are considering is semisimple. A Poisson pencil of hydrodynamic type 
$$
\left(g^{ij}_a(u)\d_x+\Gamma^{ij}_{k,a}(u) u^k_x \right) \delta(x-y),
\quad a=1,2
$$
is semisimple if the polynomial $\det \left(g^{ij}_1-\lambda g_2^{ij}\right)$ of degree $n$ in $\lambda$ has $n$ pairwise distinct non-constant roots on $U\subset \R^n$. In such case~\cite{dlz06} one can use the roots as a set of coordinates on $U$, called canonical coordinates. This choice ensures that both metrics $g_{1,2}^{ij}$ are diagonal with diagonal entries respectively equal to $f^i(u)$, $u^i f^i(u)$, $i=1, \dots , n$ for non-vanishing functions $f^i(u)$ on $U$. 
%
%The Darboux type theorem proved by Getzler~\cite{g02}, ensures that in the case of a single Poisson bracket all higher order terms in $\epsilon$ can be eliminated by a Miura transformation. One can then use a Miura transformation to kill all the higher order terms in $\epsilon$ in the first Poisson bracket. 

In this paper we shall consider the deformation problem of semisimple Poisson pencils of hydrodynamic type
by working in canonical coordinates. The change of coordinates to canonical ones is an example of a 
Miura transformation of the first kind, i.e., a diffeomorphism with the terms $O(\epsilon)$ in \eqref{Miura1} 
equal to zero. By fixing these coordinates, we are therefore left with the problem of classifying Poisson
pencils up to Miura transformations of the second kind, that is, transformations of the dependent 
coordinates as in \eqref{Miura1}, with the zeroth order constant in $\epsilon$ equal to the identity.
Since Miura transformations of the first and second kind obviously generate the whole Miura group, 
this classification problem is equivalent to the original one described above.
Let us now give a precise formulation of this deformation problem in canonical coordinates.

\subsection{Classification of Poisson pencils and the extension problem}
\label{sec:deformation}

Let $\{,\}^0_\lambda= \{,\}^0_2- \lambda \{,\}^0_1$ be a semisimple Poisson pencil of hydrodynamic type~\cite{dlz06}, and let $u^1, \dots , u^n$ be the associated canonical coordinates  over a domain $U \subset \R^n$ where $u^i-u^j\not=0$ for $i\not=j$.
The two compatible Poisson brackets $\{,\}^0_{1,2}$ are of the form
\begin{equation}
\{ u^i(x) , u^j(y) \}^0_a = g^{ij}_a (u(x)) \delta'(x-y) + \Gamma_{k,a}^{ij} (u(x)) u_x^k(x) \delta(x-y), 
\end{equation}
with $a=1,2$, $i, j =1, \dots n$, 
where the contravariant metrics are given by 
\begin{equation}
g_1^{ij} = f^i\delta_{ij} , \quad 
g_2^{ij} = u^i f^i \delta_{ij} \quad\quad \mbox{(no summation over $i$)}
\end{equation} 
and $\Gamma^{ij}_{k,a} = - g_a^{il} \Gamma_{lk,a}^j$, where $\Gamma^j_{lk,a}$ are the Christoffel symbols of the metric $g_a^{ij}$, and $f^1(u), \dots , f^n(u)$ are non-vanishing functions on $U$.

A deformation of $\{,\}^0_\lambda$ is given by a pencil
\begin{equation}
\{, \}_\lambda = \{,\}_2 - \lambda \{, \}_1
\end{equation}
where $\{, \}_a$, $a=1,2$ are compatible Poisson brackets of the form
\begin{equation} \label{deform} 
\{u^i(x), u^j(y) \}_a = \{u^i(x), u^j(y) \}^{0}_a + \sum_{k>0} \epsilon^k
\sum_{l=0}^{k+1} A^{ij}_{k,l;a} (u(x)) \delta^{(l)}(x-y)
\end{equation}
with $A^{ij}_{k,l;a}\in\cA$ and $\deg A^{ij}_{k,l;a} = k-l+1$. Here $\cA$ denotes the space of differential polynomials in $u^1, \dots , u^n$, i.e. formal power series in the variables $u^{i,s}$, $i=1, \dots ,n$, $s>0$, with coefficients that are smooth functions of $u^1, \dots ,u^n$. The degree is defined by setting $\deg u^{i,s} = s$. 

Two deformations are equivalent if they are related by a Miura transformation (of the second kind~\cite{lz11}), i.e. by a change of variables of the form
\begin{equation}
u^i \mapsto \tilde{u}^i = u^i + \sum_{k>0} \epsilon^k F_k^i, \quad i=1, \dots , n
\end{equation}
with $F_k^i \in \cA$ and $\deg F_k^i = k$.

An infinitesimal deformation $\{,\}_\lambda^{\leq2}$ of $\{,\}^0_\lambda$ is given by a pair of 
compatible Poisson brackets of the form~\eqref{deform} where terms of $O(\epsilon^3)$ are 
disregarded. This means that in the expansion \eqref{deform} above, we only consider the 
the coefficients $A^{ij}_{k,l;a}(u(x))$ for $k$ up to $2$, so that the highest derivative $\delta^{(l)}(x-y)$.
appearing is $3$.
Two infinitesimal deformations are equivalent iff they are related by a Miura transformation up to $O(\epsilon^3)$. 
The following theorem, which classifies the deformations of $\{,\}^0_\lambda$, was proved in~\cite{lz05, dlz06}.
\begin{theorem}
Two deformations of $\{,\}^0_\lambda$ are equivalent if and only if the corresponding infinitesimal 
deformations are equivalent. Given an infinitesimal deformation of $\{,\}^0_\lambda$, the functions, called central invariants, defined by
\[
c_i(u):=\frac{1}{3(f^i(u))^2}\left(A^{ii}_{2,3;2}-u^iA_{2,3;1}^{ii}+\sum_{k\not =i}\frac{(A^{ki}_{1,2;2}-u^iA^{ki}_{1,2;1})^2}{f^k(u)(u^k-u^i)}\right),
\]
for $ i=1,\ldots,n,$ only depend on the single variable $u^i$ are invariant under Miura transformations. Two infinitesimal deformations of $\{,\}^0_\lambda$ are equivalent if and only if they have the same central invariants. 
\end{theorem}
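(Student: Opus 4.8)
The plan is to recast the classification as a deformation problem for the Poisson pencil, controlled by its bihamiltonian cohomology. To the brackets $\{,\}^0_a$ one associates bivectors $P^0_a$ in the algebra of local multivector fields, namely differential polynomials in the $u^{i,s}$ together with odd variables $\theta_i^s$, in which the Poisson and compatibility conditions for the pencil $P^0_\lambda=P^0_2-\lambda P^0_1$ read $[P^0_\lambda,P^0_\lambda]=0$ for the Schouten bracket $[\,,\,]$. A deformation \eqref{deform} then corresponds to $P_\lambda=P^0_\lambda+\sum_{k>0}\epsilon^k P^{(k)}_\lambda$ subject to $[P_\lambda,P_\lambda]=0$, while a Miura transformation of the second kind is generated order by order by the adjoint action of a degree-raising vector field $X$. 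Writing $d_\lambda:=[P^0_\lambda,\,\cdot\,]$ for the resulting differential, the deformation equation at order $\epsilon^k$ takes the form $d_\lambda P^{(k)}_\lambda=-\tfrac12\sum_{i+j=k}[P^{(i)}_\lambda,P^{(j)}_\lambda]$, whose right-hand side is a $d_\lambda$-cocycle assembled from lower orders. Thus the whole problem is governed by the cohomology of $d_\lambda$: infinitesimal deformations modulo equivalence are represented by the bivector cohomology $BH^2$, the action of Miura transformations is the coboundary $d_\lambda$ out of vector fields, and obstructions to integrating an infinitesimal deformation to a full one live in the trivector cohomology $BH^3$, everything taken in the standard grading that assigns $\deg u^{i,s}=\deg\theta_i^s=s$ and $\deg\epsilon=-1$.

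With this dictionary the three assertions reduce to cohomological statements graded by this degree $d$. First, every infinitesimal deformation extends to a full one provided $BH^3_d=0$ for all $d$ above the finite range where the infinitesimal data sit; solving $d_\lambda P^{(k)}_\lambda=(\text{known cocycle})$ order by order then produces the full pencil, giving surjectivity onto infinitesimal equivalence classes. Second, if two full deformations have equivalent infinitesimal parts, one applies the infinitesimal Miura transformation (extended arbitrarily to all orders) so that they agree modulo $\epsilon^3$, and then removes the discrepancy inductively: at order $\epsilon^k$ with $k\geq3$ the difference $P^{(k)}_\lambda-\tilde P^{(k)}_\lambda$ is a $d_\lambda$-cocycle, and $BH^2_d=0$ in the relevant high degrees forces it to be a coboundary $d_\lambda X_k$, which is undone by the Miura transformation generated by $\epsilon^k X_k$; iterating gives a formal Miura transformation relating the two deformations, which is the nontrivial direction of the first claim. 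The reverse direction is the trivial restriction of an equivalence to its $\epsilon^2$-truncation.

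It remains to identify the infinitesimal equivalence classes with the central invariants, i.e. to compute $BH^2_d$ in the finite range of degrees where it need not vanish. Writing out the general deformation of order up to $\epsilon^2$, imposing the cocycle conditions coming from the order $\epsilon$ and $\epsilon^2$ parts of $[P_\lambda,P_\lambda]=0$, and quotienting by the coboundaries generated by low-degree vector fields, one finds that in the semisimple case the residual moduli are exactly $n$ functions. Here semisimplicity is decisive: in canonical coordinates both leading metrics are diagonal, $g^{ii}_1=f^i$ and $g^{ii}_2=u^i f^i$, so the pencil coefficient $g^{ii}_\lambda=(u^i-\lambda)f^i$ degenerates precisely at $\lambda=u^i$, and the cocycle constraints decouple the diagonal slots enough that each surviving class is a single function attached to the $i$-th slot and depending only on the coordinate $u^i$. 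The explicit representative is the combination $c_i(u)$ in the statement, in which the diagonal $\epsilon^2$-term $A^{ii}_{2,3;2}-u^iA^{ii}_{2,3;1}$ is corrected by the squares of the off-diagonal $\epsilon$-terms so as to be annihilated by the coboundary action; Miura invariance of $c_i$ is then the direct verification that $d_\lambda X$ shifts the constituent coefficients without changing this combination, and two infinitesimal deformations share all $c_i$ precisely when their difference is a coboundary.

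The serious obstacle is the cohomology vanishing underlying the first two claims, namely $BH^2_d=0$ and $BH^3_d=0$ for all $d$ outside the finite exceptional range. The dispersionless part is the classical Poisson cohomology of a Dubrovin--Novikov bracket and is well understood, but controlling the full dispersive bihamiltonian complex, with the coupling of the two brackets and the dependence on $\lambda$, is the technical heart of the matter. I would attack it through a filtration of the complex by $\epsilon$-order (equivalently by the number of $\delta$-derivatives), whose associated graded is more tractable, and run the resulting spectral sequence; semisimplicity allows the diagonalized leading term to split the computation into $n$ essentially one-dimensional problems together with controllable interaction terms, reducing the vanishing to an estimate on a finite-dimensional complex in each fixed degree. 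Establishing this vanishing across the entire infinite range of degrees is exactly what makes the deformation problem unobstructed, and is the point to which the remainder of the paper is devoted.
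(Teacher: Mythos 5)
You should first note that the paper does not actually prove this statement: it is quoted verbatim from~\cite{lz05, dlz06}, and only the results it needs ($BH^2_2=0$, $BH^2_{\geq 4}=0$, $BH^2_3\cong\bigoplus_{i=1}^n C^\infty(\R)$) are used downstream. Your roadmap does match the strategy of those references at the level of headlines — deformations controlled by bihamiltonian cohomology, claim~1 by an order-by-order induction using vanishing of $BH^2_d$ for $d\geq 4$, claim~3 by computing the residual $BH^2_3$. But as a proof it has genuine gaps. The entire substance of the second and third claims — the computation $BH^2_3(\hcF,d_1,d_2)\cong\bigoplus_{i=1}^n C^\infty(\R)$, the derivation of the explicit formula for $c_i$ (in particular why the off-diagonal order-$\epsilon$ coefficients $A^{ki}_{1,2;a}$ enter through that precise quadratic combination, which is a residue of the deformed pencil at $\lambda=u^i$), the proof that $c_i$ depends only on $u^i$, and the Miura-invariance check — is asserted (``one finds'', ``direct verification'') rather than carried out. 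These computations are exactly the nontrivial content of~\cite{lz05,dlz06}, and nothing in your text would let a reader reconstruct them; in particular you never use the cocycle conditions at orders $\epsilon$ and $\epsilon^2$, which is where the single-variable dependence of $c_i$ comes from.

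Two further points. First, you invoke the vanishing of $BH^3$ for ``surjectivity onto infinitesimal equivalence classes'', but the extension of an infinitesimal deformation to a full one is not part of this statement — it is Theorem~\ref{thm:main}, and within this paper $BH^3_{\geq 5}(\hcF)=0$ is the \emph{new} main result, so it cannot be presupposed here; fortunately it is not needed, since claim~1 compares two \emph{given} full deformations. Second, your induction step glosses a real issue: the order-$\epsilon^k$ discrepancy is a pencil $Q_2-\lambda Q_1$, and it must be removed by a single $\lambda$-independent vector field $X$ with $Q_a=d_aX$ for both $a=1,2$; ``$d_\lambda$-cocycle plus $BH^2_d=0$ implies coboundary'' does not by itself produce such an $X$. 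The standard route is two-step: first use triviality of the single Poisson cohomology $H^2(\hcF,d_1)$ in positive degree to normalize $Q_1=0$, then use the definition $BH^2=(\ker d_1\cap\ker d_2)/\im d_1d_2$ to write $Q_2=d_1d_2Z$ and act by $X=d_1Z$, which preserves $P_1$ — this is the same mechanism the paper uses in its extension argument ($P_2^4=d_1Q_1$, etc.). Finally, your closing paragraph defers the required vanishing to a filtration/spectral-sequence argument ``to which the remainder of the paper is devoted'', but the paper's Theorem~\ref{thm:A} supplies the $BH^3$ vanishing for the extension problem; the $BH^2$ input for the present statement must come from~\cite{lz05,dlz06} (or be proven), so as written the proposal leaves precisely the content of the theorem unestablished.
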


The main open problem in the deformation theory of a semisimple Poisson pencil $\{,\}^0_\lambda$ is the problem of extension: making a choice of central invariants $c_1(u^1), \dots , c_n(u^n)$  fixes an equivalence class of infinitesimal deformations of $\{,\}^0_\lambda$, but the question is whether there exists a full deformation $\{,\}_\lambda$ that extends an infinitesimal one to all orders in $\epsilon$. A positive solution of the existence problem was conjectured (and formulated in terms of vanishing of the third bihamiltonian cohomlogy groups) by Liu and Zhang in~\cite{lz05}.

The main result of this paper is the solution of this conjecture, i.e., the affirmative answer to the extension problem.
\begin{theorem} \label{thm:main}
Let $\{,\}_\lambda^{\leq2}$ 
%\begin{equation}
%\{ , \}^{\leq2}_\lambda = \{, \}^0_\lambda + \epsilon \{, \}^1_\lambda + \epsilon^2 \{, \}^2_\lambda
%\end{equation}
be an infinitesimal deformation of a semisimple Poisson pencil of hydrodynamic type $\{, \}^0_\lambda$. Then there exists a deformation $\{ , \}_\lambda$ that extends $\{, \}_\lambda^{\leq2}$ to all orders in $\epsilon$. 
\end{theorem}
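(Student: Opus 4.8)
The plan is to recast the extension problem as a Maurer--Cartan deformation problem for the pencil and to solve it order by order in $\epsilon$, with the successive obstructions living in a bihamiltonian cohomology group that vanishes in all the degrees that occur. First I would pass to the bivector/Schouten-bracket formulation: let $P^0_\lambda$ be the bivector representing $\{,\}^0_\lambda$ and write $P_\lambda = P^0_\lambda + X$ with $X = \sum_{k\geq 1}\epsilon^k X_k$, a formal series whose coefficients $X_k$ are bivectors of the type allowed in \eqref{deform}. Because $[P^0_\lambda,P^0_\lambda]=0$ for all $\lambda$, the requirement that $P_\lambda$ be a compatible Poisson pencil, $[P_\lambda,P_\lambda]=0$, is equivalent to the Maurer--Cartan equation $\Dla X + \tfrac12[X,X]=0$, where $\Dla := [P^0_\lambda,\,\cdot\,]$ is the bihamiltonian differential. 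Extracting the coefficient of $\epsilon^k$ gives $\Dla X_k = -\tfrac12\sum_{i+j=k,\,i,j\geq 1}[X_i,X_j] =: R_k$, whose right-hand side involves only $X_1,\dots,X_{k-1}$. The given infinitesimal deformation $\{,\}^{\leq 2}_\lambda$ supplies $X_1$ and $X_2$ satisfying these relations through order $\epsilon^2$, and the task is to construct $X_3,X_4,\dots$ inductively.

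The inductive step is the classical obstruction argument. Suppose $X_1,\dots,X_{k-1}$ have been chosen so that the Maurer--Cartan equation holds modulo $\epsilon^k$. Then $R_k$ is a trivector, and the graded Jacobi identity for the Schouten bracket, combined with the lower-order relations, forces $\Dla R_k = 0$. Hence $R_k$ determines a class $[R_k]$ in the bihamiltonian obstruction cohomology, that is, in the super-degree-$3$ (Schouten degree) part of $H^\bullet(\hcF,\Dla)$ in the standard degree $d(k)$ carried by the $\epsilon^k$-component of a trivector; write this group as $\mathrm{BH}^3_{d(k)}$. If $[R_k]=0$, then $R_k=\Dla X_k$ for some bivector $X_k$, the deformation is extended to order $\epsilon^k$, and iterating over $k\geq 3$ yields the desired formal series $X$, i.e.\ a full deformation extending $\{,\}^{\leq 2}_\lambda$. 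Any two choices of $X_k$ differ by a $\Dla$-closed bivector, which reflects the residual Miura/deformation freedom classified in super-degree $2$, but for existence a single choice at each step suffices.

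Everything up to this point is formal bookkeeping; the genuine content, and the main obstacle, is to prove that the obstruction groups $\mathrm{BH}^3_{d(k)}$ actually vanish for every $k\geq 3$ that arises. This is precisely the cohomology vanishing announced in the abstract: the central invariants of the classification theorem above account for the nontrivial part of the bihamiltonian cohomology, which lives in super-degree $2$ and a few low standard degrees and is pinned down by the infinitesimal data, whereas the obstruction degrees $d(k)$ all lie in the range where the cohomology is trivial. I would establish this by working in canonical coordinates and in the $\theta$-formalism, endowing the complex $(\hcF,\Dla)$ with a filtration that decouples the two Poisson differentials, and analysing the associated spectral sequence. Semisimplicity — the pairwise distinct canonical coordinates and the diagonal form $g_1^{ij}=f^i\delta_{ij}$, $g_2^{ij}=u^if^i\delta_{ij}$ — is what makes the pages of this spectral sequence computable and is the key input that forces the higher cohomology to vanish in the required degrees. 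Carrying out that computation in full is the technical heart of the argument and the step on which the whole extension result rests.
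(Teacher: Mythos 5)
Your outline matches the paper's strategy in broad terms (order-by-order obstruction theory plus a cohomological vanishing theorem proved via filtrations and spectral sequences in canonical coordinates), but there is a genuine gap at the key inductive step, namely ``$[R_k]=0$, hence $R_k=\Dla X_k$ for some bivector $X_k$.'' This ignores the pencil constraint. A deformation of $\{,\}^0_\lambda$ is not an arbitrary Maurer--Cartan element for the single differential $\Dla$ on $\hcF[\lambda]$: each $X_k$ must be \emph{linear} in $\lambda$, $X_k=X_{k,2}-\lambda X_{k,1}$, equivalently the three conditions $[P_1,P_1]=[P_1,P_2]=[P_2,P_2]=0$ must hold separately. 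Knowing only that $R_k$ is $d_\lambda$-exact in $\hcF[\lambda]$ produces a primitive that is a polynomial in $\lambda$ of uncontrolled degree, which does not define a pencil of two Poisson structures. The group that actually houses the obstruction is the bihamiltonian cohomology
\begin{equation}
BH^3_d(\hcF,d_1,d_2)=\frac{\ker d_1\cap\ker d_2}{\im (d_1 d_2)},
\end{equation}
which is not the cohomology of any single differential; your identification of it with the super-degree-$3$ part of $H^\bullet(\hcF[\lambda],d_\lambda)$ is exactly the nontrivial Lemma~\ref{blzlemma} of Liu and Zhang, valid only for $d\geq2$, and it must be invoked rather than built into the notation.

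The paper's induction closes this gap in a specific way your sketch omits. First, using the classification theorem (central invariants), the infinitesimal deformation is Miura-normalized to $P_1^{\leq2}=P_1^0$, $P_2^{\leq2}=P_2^0+P_2^2$, so that only the second bracket is deformed and only even-degree corrections $P_2^4,P_2^6,\dots$ are sought; this also quarantines the nonvanishing group $BH^2_3$, which is accounted for by the central invariants rather than by a vanishing statement. Then at each stage the obstruction, e.g.\ $\tfrac12[P_2^2,P_2^2]\in\hcF^3_6$, lies in $\ker d_1\cap\ker d_2$, and the vanishing of $BH^3_{\geq6}$ yields it in the form $d_1d_2Q$; the crucial move is to set $P_2^{2m}=d_1Q$, which \emph{simultaneously} satisfies $d_1P_2^{2m}=0$ (so $P_1$ stays undeformed and the result remains a pencil) and the $d_2$-equation. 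Without this double-exactness mechanism your induction step does not produce a compatible pair. A smaller divergence worth noting: the paper does not filter $(\hcF,\Dla)$ directly, as you propose, but proves the vanishing on $(\hcA[\lambda],\Dla)$ --- where the identifications imposed by integration are absent and the explicit homotopy operators can be written down --- and then transfers it to $\hcF[\lambda]$ via the long exact sequence coming from $0\to\hcA[\lambda]/\R[\lambda]\to\hcA[\lambda]\to\hcF[\lambda]\to0$, at the cost of needing vanishing in two adjacent super-degrees $p$ and $p+1$.
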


\subsection{Methods of proof and organization of the paper}

The problem of the description of automorphisms, infinitesimal deformations, and obstructions to the extension of infinitesimal deformations for any algebraic structure can be formulated in terms of some cohomology groups associated to it. In our case, in order to prove that the deformation of a semisimple pencil of Poisson brackets is not obstructed we have to show that certain cohomology groups, called bihamiltonian cohomology, are equal to zero. 

There is no straightforward way to compute these cohomology groups. However, Liu and Zhang have shown that the vanishing of these cohomology groups follows from the vanishing of the cohomology of the auxiliary complex $(\hcA[\lambda],\Dla)$, defined below, in certain degrees. It is a difficult task to compute the full cohomology of this complex, however we are able to show vanishing of such cohomology in the required degrees using a clever choice of filtrations and the structure of the associated spectral sequences. 

The paper is organized as follows. In Section~\ref{sec:basicformalism} we recall the definition of the auxiliary complex and explain its relation to the problems of deformation of pencils of Poisson structures. In particular, we formulate a statement about the complex $(\hcA[\lambda],\Dla)$ that implies Theorem~\ref{thm:main}. In Section~\ref{sec:filtrations} we introduce a series of filtrations on the complex $(\hcA[\lambda],\Dla)$ that allows us to prove the key statement about its cohomology.

\subsection{Conventions}
Throughout the paper we use the summation convention in the sense that repeated (upper- and 
lower-)indices should be summed over. However, there are a few exceptions when the metrics 
$g_1^{ij}=f^i\delta_{ij}$ and $g_2^{ij}=u^if^i\delta_{ij}$, or the tensors derived from them are involved. 
Such equations always involve the functions $f^i$. To determine which indices are to be summed over,
it suffices to consider the other side of the equation and the indices that appear in there.

\section{Theta formalism, polyvector fields and cohomology}
\label{sec:basicformalism}

The deformation theory of a pencil of Poisson brackets is controlled by the so-called bihamiltonian cohomology defined on the space of local polyvector fields. In order to show the vanishing of such bihamiltonian cohomology in certain degrees, from which Theorem~\ref{thm:main} follows, we consider the cohomology of a related complex $(\hcA[\lambda], \Dla)$, introduced by Liu and Zhang~\cite{lz13}. This approach has a double advantage: first, we can work in the space $\hcA$, where the identifications imposed by integration are not imposed, making computations simpler; second, we can compute the cohomology on the space $\hcA[\lambda]$ of polynomials in $\lambda$, rather than the bihamiltonian cohomology on $\hcA$, and this allows us to use directly the methods of spectral sequences associated with filtrations. 

In this Section we review some basic definitions, mainly from~\cite{lz13}, state our main theorem and derive its most important consequences.

\subsection{Basic definitions} \label{basic-definitions}
Consider the supercommutative associative algebra $\hcA$ defined as  
\begin{equation}
\hcA = C^\infty(U) [[u^{i,1}, u^{i,2}, \dots; \theta_i^0, \theta_i^1, \theta_i^2, \dots ]],
\end{equation}
where $u^{i,s}$, $i=1,\dots,n$, $s=1,2,\dots$ are  formal even variables and $\theta_i^s$, $i=1,\dots,n$, $s=0,1,2,\dots$ are  odd variables.  An element in $C^\infty (U)$ is represented by a function of the coordinates $u^i$, $i=1,\dots,n$ on the domain $U \subset \R^n$.
We define the \emph{standard gradation} on $\hcA$ by assigning the degrees 
\begin{equation}
\deg u^{i,s} = \deg \theta^s_i = s, \quad s=1,2,\dots
\end{equation}
and degree zero to both $\theta_i=\theta_i^0$ and the elements in $C^\infty (U)$. The standard degree $d$ homogeneous component of $\hcA$ is denoted $\hcA_d$. 
Notice that $\hcA_d$ coincides with the standard degree $d$ homogeneous component of the polynomial algebra $C^\infty(U) [u^{i,1},  \dots , u^{i,d}; \theta_i^0, \dots , \theta_i^d ]$.
 The {\it super gradation}, that we will denote by $\deg_\theta$, is defined by assigning degree one to $\theta_{i}^{s}$ for $s\geq0$ and degree zero to the remaining generators of $\hcA$. The super degree $p$ homogeneous component is denoted $\hcA^p$. We also denote 
\begin{equation}
\hcA_d^p=\hcA_d \cap \hcA^p .
\end{equation}
The standard derivation on $\hcA$,
\begin{equation}
\partial = \sum_{s\geq0} \left( u^{i,s+1} \frac{\partial }{\partial u^{i,s}} + \theta_i^{s+1} \frac{\partial }{\partial \theta_i^s} \right),
\end{equation}
is compatible with the standard and super gradations, in particular it increases the standard degree by one and leaves invariant the super degree. 
Thanks to the homogeneity of $\partial$, the space $\hcF := \frac{\hcA}{\partial \hcA}$ still possesses  two gradations, which we keep denoting with indices $p$ and $d$. The elements of $\hcF^p$ are called {\it local $p$-vectors} and the projection map is denoted by an integral
\begin{equation}
\int: \hcA \to \hcF.
\end{equation}
The space $\hcF$ can be endowed with the Schouten-Nijenhuis bracket
\begin{equation}
[,]:\hcF^p \times \hcF^q \to \hcF^{p+q-1} ,
\end{equation}
which satisfies the usual graded skew-symmetry and graded Jacobi identities, see~\cite{lz13, lz11} for  more details.

A Poisson bivector $P$ is an element of $\hcF^2$ that satisfies $[P,P]=0$. If $P$ is a Poisson bivector, its adjoint action $d_P = [P, \cdot ]$ on $\hcF$ by the graded Jacobi identity squares to zero, hence defines a differential complex $(\hcF, d_P)$. 
Given a Poisson bivector $P$, the super derivation $D_P$ on $\hcA$ is defined by
\begin{equation}
D_P = \sum_{s\geq0}  \left( \partial^s \left( \frac{\delta P}{\delta \theta_i}  \right) \frac{\partial }{\partial u^{i,s}} + \partial^s  \left( \frac{\delta P}{\delta u^i}  \right) \frac{\partial }{\partial \theta_i^s} \right),
\end{equation}
where the variational derivatives on $\hcA$ w.r.t. $u^i$ and $\theta_i$ are defined as follows 
\begin{equation}
\frac{\delta}{\delta u^i} =\sum_{s=0}^\infty (-\partial)^s \frac{\partial}{\partial u^{i,s}}, 
\qquad
\frac{\delta}{\delta \theta_i}=\sum_{s=0}^\infty (-\partial)^s \frac{\partial}{\partial \theta_i^s}.
\end{equation}
The super derivation $D_P$ squares to zero, and is such that the integral defines a map of differential complexes
\begin{equation}
\int : (\hcA, D_P) \to (\hcF, d_P) .
\end{equation}
As pointed out in~\cite{lz13} this allows us to work with the complex $(\hcA, D_P)$ rather than with the more complicated $(\hcF, d_P)$. 

A Poisson pencil is given by two Poisson bivectors $P_1$, $P_2$ which are compatible, i.e. $[P_1, P_2]=0$. For each $\lambda$, then, $P_\lambda : =P_2 - \lambda P_1$ is also a Poisson bivector. We denote by $d_1$ and $d_2$ the differentials on $\hcF$ corresponding to $P_1$ and $P_2$, respectively. Due to compatibility, $d_\lambda := d_2 - \lambda d_1$ squares to zero. We denote by $D_1$ and $D_2$ the super derivations on $\hcA$ associated to $P_1$ and $P_2$, respectively. Their compatibility in this case implies that $D_\lambda := D_2 - \lambda D_1$ also squares to zero. In summary we can define two differential complexes associated to a Poisson pencil
\begin{equation}
(\hcA[\lambda], D_\lambda ) , \quad  (\hcF[\lambda], d_\lambda) .
\end{equation}

\begin{remark}
The Poisson brackets $\{,\}_{1,2}$ introduced in~\eqref{deform} are elements of the space $\Lambda_{loc}^2$ of local bivectors written in $\delta$-formalism, see~\cite{dz01}. There is a one to one correspondence between the space of local $p$-vectors $\Lambda^p_{loc}$, written in $\delta$-formalism and the space $\hcF^p$. We will not recall it here in general but rather refer the reader to~\cite{lz13}. For the case of a bivector, written as
\begin{equation}
\{u^i(x), u^j(y) \}  = \sum_{s\geq0} B_s^{ij} \delta^{(s)}(x-y),
\end{equation}
with $B_s^{ji} \in \cA$, 
the corresponding element in $\hcF^2$ is 
\begin{equation}
P = \frac12 \int \theta_i  \sum_{s\geq0} B_s^{ij}  \theta^s_j .
\end{equation}

\end{remark}

\subsection{The complex $(\hcA[\lambda], D_\lambda)$}
Let us fix a semisimple Poisson pencil of hydrodynamic type  $\{,\}^0_\lambda= \{,\}^0_2- \lambda \{,\}^0_1$, and denote by $u^1, \dots , u^n$  the associated canonical coordinates on $U\subset\R^n$, see \S\ref{sec:deformation}. The compatible Poisson brackets $\{,\}^0_{1,2}$ are represented in $\hcF^2_1$ by two bivectors 
\begin{equation}
P_a = \frac12 \int \left( g^{ij}_a \theta^0_i \theta^1_j + \Gamma_{k,a}^{ij} u^{k,1} \theta_i \theta_j  \right), \quad a=1,2 .
\end{equation}
In canonical coordinates the Christoffel symbol of $g_1^{ij} = f^i \delta_{ij}$ is
\begin{equation}
\Gamma_{k,1}^{ij}  = 
\frac12 \left(  \partial_k f^i  \delta_{ij} 
+\frac{f^i}{f^j} \partial_i f^j \delta_{jk} 
-\frac{f^j}{f^i} \partial_j f^i \delta_{ik}
\right)
\end{equation}
and $P_1$ is given by
\begin{equation}
P_1 = \frac12 \int \left( f^i \theta_i \theta_i^1 +  \frac{f^i}{f^j} \partial_i f^j u^{j,1} \theta_i \theta_j \right) .
\end{equation}
We denote by $D_1=D(f^1,\dots,f^n) $ the super derivation on $\hcA$  corresponding to $P_1$. A straightforward computation gives us the following formula
% checked first 3 lines 8/15 gc
\begin{align}
\label{diff-biham}
& D(f^1,\dots,f^n) = \sum_{s\geq0}  \partial^s\left(f^i\theta_i^1\right)\frac{\d}{\d u^{i,s}}
\\ \notag 
& +\frac{1}{2}\sum_{s\geq0}  \partial^s\left(
\d_jf^i u^{j,1} \theta_i^0 
+ f^i\frac{\d_i f^j}{f^j} u^{j,1}\theta_j^0 
- f^j\frac{\d_j f^i}{f^i} u^{i,1}\theta_j^0
\right) \frac{\d}{\d u^{i,s}}
\\ \notag
& + \frac{1}{2} \sum_{s\geq0}  \partial^s \left( 
\d_if^j  \theta_j^0 \theta_j^1 
+ f^j\frac{\d_j f^i}{f^i} \theta_i^0 \theta_j^1 
- f^j\frac{\d_j f^i}{f^i} \theta_j^0 \theta_i^1
\right) \frac{\d}{\d \theta_i^s}
\\ \notag
%last two lines :
&+\frac12 \sum_{s\geq0} \partial^s \Big( 
\frac{\partial }{\partial u^i} \big( \frac{f^k}{f^j}\partial_k f^j \big) u^{j,1} \theta_k \theta_j  
-\frac{\partial }{\partial u^j} \big( \frac{f^k}{f^i}\partial_k f^i \big) u^{j,1} \theta_k \theta_i \Big) \frac{\partial }{\partial \theta_i^s} .
% alternative version of last two lines:
%& +
%\frac12 \sum_{s\geq 0} \d^s\left(
%f^j \frac{\d_i f^l}{f^l} \frac{\d_i f^l}{f^l} u^{l,1} \theta_l^0 \theta_j^0
%- f^l \frac{\d_i f^l}{f^l} \frac{\d_l f^j}{f^j} u^{j,1} \theta_l^0 \theta_j^0
%\right. 
%\\ \notag
%& + f^l \frac{\d_l f^i}{f^i} \frac{\d_i f^j}{f^j} u^{j,1} \theta_l^0 \theta_j^0
%- \frac{f^lf^j}{f^i} \frac{\d_l f^i}{f^i} \frac{\d_j f^i}{f^i} u^{i,1} \theta_l^0 \theta_j^0
%\\ \notag
%& 
%+ f^l \frac{\d_l f^i}{f^i} \frac{\d_l f^j}{f^j} u^{j,1} \theta_j^0 \theta_i^0
%- f^j \frac{\d_l f^i}{f^i} \frac{\d_j f^l}{f^l} u^{l,1} \theta_j^0 \theta_i^0
%\\ \notag &
%\left.
%+f^l \frac{\d_lf^i}{f^i} \frac{\d_j f^l}{f^l} u^{j,1} \theta_l^0 \theta_i^0
%+f^l \frac{\d_lf^i}{f^i} \frac{\d_l f^j}{f^j} u^{j,1} \theta_l^0 \theta_i^0
%\right) \frac{\d}{\d\theta^s_i}.
\end{align}

Notice that $D$ is an homogeneous operator of standard degree one, therefore it is well-defined on $\hcA[\lambda]$, because each homogeneous component $\hcA_d[\lambda]$ the infinite sums appearing in~\eqref{diff-biham} have only a finite number of non-vanishing terms.

The super derivation corresponding to $P_2$ is then given by 
\begin{equation}
D_2:=D(u^1f^1,\dots,u^nf^n) .
\end{equation} 
Our aim is to compute the cohomology of the complex $(\hcA[\lambda], D_\lambda)$ with $D_\lambda = D_2 - \lambda D_1$, and  $D_1$, $D_2$ given above.

%the Christoffel symbols 
%\begin{equation}
%\Gamma_{k,2}^{ij}  = 
%\frac12 \left( f^i \delta_{ij} \delta_{ik} +
%u^i \partial_k f^i  \delta_{ij} 
%+u^i \frac{f^i}{f^j} \partial_i f^j \delta_{jk} 
%-u^j\frac{f^j}{f^i} \partial_j f^i \delta_{ik}
%\right)
%\end{equation}

\subsection{The main theorem}
In Section 3 we prove the following vanishing theorem for the cohomology of the complex $(\hcA[\lambda], D_\lambda)$. 
\begin{theorem} 
\label{thm:A}
The cohomology $H^p_d(\hcA[\lambda],D_\lambda)$ vanishes for all bi-degrees $(p,d)$, unless
\begin{equation}
d=0, \dots, n, \quad p=d, \dots , d+n, 
\end{equation}
or
\begin{equation}
d=n+1,n+2, \quad p=d, \dots , d+n-1.
\end{equation}
\end{theorem}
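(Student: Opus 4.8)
The first thing I would record is that, by \eqref{diff-biham} and its analogue for $D_2$, the operator $\Dla$ is bihomogeneous of bidegree $(1,1)$ with respect to the super degree $p$ and the standard degree $d$. Hence the quantity $q:=p-d$ is preserved, and $(\hcA[\lambda],\Dla)$ splits as a direct sum over $q\in\Z$ of subcomplexes $\bigl(\hcA^{d+q}_d[\lambda],\Dla\bigr)$, each graded by $d$ alone. This already settles one edge of the claimed region for free: a monomial of super degree $p$ carries at most $n$ of the degree-zero odd variables $\theta_i^0$, while every other generator it contains contributes at least $1$ to $d$, so $q\leq n$ on all of $\hcA$, and therefore $H^{d+q}_d=0$ automatically when $p>d+n$. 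The theorem thus reduces to two vanishing statements: first, exactness of the strips with $q<0$ (this gives $p\geq d$); and second, for each fixed $q\in\{0,\dots,n\}$, exactness of the $d$-graded complex in all degrees above the stated threshold ($d>n+2$ in general, and $d>n$ when $q=n$).

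The engine for both would be a filtration whose associated graded differential is the leading symbol of $\Dla$ in the top jet variables. Reading off \eqref{diff-biham}, the symbol contracts the pairs $(u^{i,s},\theta_i^s)$ for $s\geq1$, and the decisive point is the coefficient: since $\Dla=D_2-\lambda D_1=D(u^if^i)-\lambda\,D(f^i)$, the symbol in the $i$-th direction is multiplied by the scalar $(u^i-\lambda)f^i$. Because the metrics $g_1^{ij}=f^i\delta_{ij}$ and $g_2^{ij}=u^if^i\delta_{ij}$ are diagonal, these contractions do not mix the $n$ directions, so the associated graded differential decouples into $n$ commuting, direction-wise Koszul-type differentials over the base ring $C^\infty(U)[\lambda]$. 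I would set up the filtration (most likely a \emph{series} of filtrations, as the introduction anticipates, refining one jet level at a time) so that this decoupling is exact on the $E_0$ page.

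Computing the symbol cohomology is then a $\C[\lambda]$-linear, direction-by-direction affair. In each direction the multiplier $u^i-\lambda$ is a non-zero-divisor but not a unit in $C^\infty(U)[\lambda]$, so the positive-level Koszul pairs are acyclic and the surviving cohomology is torsion, supported at $\lambda=u^i$ and isomorphic to $C^\infty(U)$, concentrated in relative super degree $0$ or $1$ for that direction. A Künneth argument over the $n$ directions assembles these into the full symbol cohomology, and the asymmetric thresholds $n$, $n+1$, $n+2$ emerge from counting how many level-$0$ survivors $\theta_i^0$ and low-jet corrections can be stacked before a positive-level (acyclic) pair is forced to appear. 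Convergence of the spectral sequence is not an issue: each $\hcA^p_d$ is a finite-rank free $C^\infty(U)$-module, so the filtration is finite on every bidegree; and since we only need vanishing, it suffices that the $E_1$ page be supported in the claimed band, after which the abutment is too.

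The main obstacle is the bidegree bookkeeping in the last step rather than any single hard estimate. Passing to the symbol is lossy, and the low-jet corrections to \eqref{diff-biham} (the non-principal terms, the Christoffel contributions, and the derivatives of $f^i$) must be resolved by the further filtrations so as not to shift cohomology just past the threshold; the genuinely delicate cases are $d=n+1,n+2$, where one must show that no class survives at $q=n$ while classes do survive at $q\leq n-1$. Getting these right amounts to controlling the torsion $\C[\lambda]$-module structure of the symbol cohomology precisely enough that the Künneth assembly reproduces the exact, non-symmetric region in the statement, and I expect that to be where most of the real work lies.
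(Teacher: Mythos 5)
Your first stage is essentially the paper's: the splitting by $q=p-d$, the free vanishing for $p>d+n$, and the filtration whose associated graded differential is the lowest-order term of $\Dla$ all appear there, and the cohomology of that term, $\Delta_{-1}=\sum_{s\geq1}(u^i-\lambda)f^i\theta_i^{s+1}\frac{\partial}{\partial u^{i,s}}$ (note the contracted pairs are $(u^{i,s},\theta_i^{s+1})$, not $(u^{i,s},\theta_i^{s})$, and the filtration must be taken with respect to $\deg_u+\deg_\theta$, since $\Delta_{-1}$ lowers $\deg_u$), is indeed computed by direction-wise homotopies in which semisimplicity ($u^i\neq u^j$ on $U$) kills the mixed terms, exactly your K\"unneth-with-Tor picture. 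However, your description of the outcome is wrong in a way that matters: the first page is $\hcC[\lambda]\oplus\bigoplus_i \im\bigl(\hat{d}_i:\hcC_i\to\hcC_i\bigr)$, with the torsion summands supported at $\lambda=u^i$, and these summands are \emph{not} ``isomorphic to $C^\infty(U)$, concentrated in relative super degree $0$ or $1$'': $\im\hat{d}_i$ contains $\hat{d}_i$ of every monomial in the variables $u^{i,s},\theta_i^{s+1}$, $s\geq1$, hence classes of arbitrarily large standard degree $d$ and super degree $p$. Consequently no amount of ``bidegree bookkeeping'' or ``counting how many level-$0$ survivors $\theta_i^0$ can be stacked'' at the symbol level can produce the bound $d\leq n+2$; the first page is simply not confined to the claimed band, and the theorem cannot follow from the symbol cohomology plus a K\"unneth assembly.

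The missing idea — which is where the bulk of the paper's proof lives — is that the differential \emph{induced on the first page} annihilates almost all of $\bigoplus_i\im\hat{d}_i$, and the mechanism for this is an eigenvalue argument, not degree counting. The paper computes the induced differential $\Delta_0'=p\circ\Delta_0\circ i$ via the explicit homotopy projections, applies two further filtrations (first by $\deg_{\theta^1}-\deg_\theta$, then, on each summand, by $\deg_{\theta_i^1}-\deg_\theta$ for the fixed $i$) to reduce to the operator $\theta_i^1\mathcal{D}_i$ on $\hat{d}_i(\hcC_i)$, and then decomposes by the weight $w$ with $w(u^{i,s})=(s+2)/2$, $w(\theta_i^s)=(s-1)/2$: on the piece $\hcC\cdot\hat{d}_i(M_i^w)$ the operator $\mathcal{D}_i$ acts through the scalars $f^i(w-1)$ and $f^i(w-\tfrac32)$, which are invertible unless $w=\tfrac32$, so every weight piece is acyclic except the one generated by $\theta_i^2$. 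The survivors $\hcC\cdot\theta_i^0\theta_i^2$ (with $\theta_i^0$ forced to appear and $\theta_i^0\theta_i^2$ of bidegree $(2,2)$) are what produce Case 2, $d=2,\dots,n+2$, $p=d,\dots,d+n-1$, while $\hcC[\lambda]$ produces Case 1. Your proposal correctly anticipates that further filtrations are needed to handle the non-principal terms, but offers no substitute for this eigenvalue computation; since the $E_1$ torsion classes have unbounded standard degree, the argument as proposed does not close, and this is a genuine gap rather than deferred routine work.
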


\begin{remark}
For $n=1$ the bi-degrees for which we cannot state vanishing according to Theorem~\ref{thm:A} are 
\begin{equation}
(p,d) = (0,0), (1,0), (1,1), (2,1), (2,2), (3,3).
\end{equation}
In~\cite{cps14-2} (and in~\cite{cps14} for the KdV case) we compute by a different method the cohomology $H^p_d(\hcA[\lambda],D_\lambda)$ for all bi-degrees, proving in particular that it vanishes also for 
\begin{equation}
(p,d) = (1,0), (1,1), (2,2). 
\end{equation}
This shows that the vanishing theorem above can be strengthened. Our result is enough, however, to prove the absence of obstructions to deformations of bihamiltonian structures, which is our main aim.
\end{remark}

%\begin{remark}
%For $n=2$ the cohomology vanishes for all $(p,d)$, but for 13 cases. For arbitrary $n$ the cohomology vanishes for all bi-degrees, but for $(n+2)^2 -3$ cases.
%\end{remark}

\begin{remark}
\label{rmk:cases}
In the proof of Theorem~\ref{thm:A} we will distinguish two sets of indexes for which we do not prove vanishing:
\begin{equation}
d=0, \dots ,n, \quad p=d, \dots , d+n \quad \text{(Case 1),}
\end{equation}
and 
\begin{equation}
d=2, \dots , n+2, \quad p=d, \dots ,d+n-1 \quad \text{(Case 2),}
\end{equation}
which clearly overlap for $n\geq2$. We distinguish the two cases since they have different sources in the complex.
\end{remark}

\subsection{Vanishing of bihamiltonian cohomology}

By the following lemma of Liu and Zhang~\cite{lz13}, the bihamiltonian cohomology of $\hcF$ is isomorphic to the cohomology of the complex\footnote{A similar description in terms of a bicomplex was given in~\cite{b08}.} $(\hcF[\lambda],d_\lambda)$ in almost all degrees $(p,d)$.  
Let $(C, \partial_1, \partial_2)$ be either the double complex $(\hcA, D_1, D_2)$ or $(\hcF, d_1, d_2)$. The bihamiltonian cohomology of the double complex $(C, \partial_1,\partial_2)$ is defined by
\begin{equation}
BH(C, \partial_1, \partial_2) = \frac{\ker \partial_1 \cap \ker \partial_2}{\im \partial_1 \partial_2} .
\end{equation}

\begin{lemma}%[\cite{lz13}] 
\label{blzlemma}
The natural embedding of $C$ in $C[\lambda]$ induces an isomorphism
\begin{equation}
BH^p_d(C,\partial_1, \partial_2 )  \cong H^p_d(C[\lambda],\partial_\lambda) 
\end{equation}
for $p\geq0$, $d\geq2$.
\end{lemma}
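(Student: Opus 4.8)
The plan is to relate the bihamiltonian cohomology $BH^p_d(C,\partial_1,\partial_2)$ to the $\lambda$-cohomology $H^p_d(C[\lambda],\partial_\lambda)$ by analyzing the effect of introducing the formal parameter $\lambda$. The key structural observation is that since $\partial_\lambda = \partial_2 - \lambda \partial_1$ and each $\partial_a$ is homogeneous of standard degree one while $\lambda$ is a scalar parameter, an element $\alpha = \sum_{r} \alpha_r \lambda^r \in C^p_d[\lambda]$ is a cocycle precisely when $\partial_2 \alpha_r = \partial_1 \alpha_{r-1}$ for all $r$ (with the boundary convention $\alpha_{-1}=0$ and the top term giving $\partial_1 \alpha_{\text{top}} = 0$). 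I would set up the natural map $BH^p_d \to H^p_d(C[\lambda])$ induced by the embedding of $C$ as the constant (degree-zero in $\lambda$) polynomials: a class $[z]$ with $\partial_1 z = \partial_2 z = 0$ maps to the constant cocycle $z$, and I would check this is well-defined, since $z = \partial_1\partial_2 w$ becomes $\partial_\lambda(\partial_2 w + \lambda \partial_1 w \cdot(\text{sign adjustments}))$, exhibiting it as a $\partial_\lambda$-coboundary.

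The core of the argument is to show this map is both injective and surjective in the stated range $p\geq 0$, $d\geq 2$. For surjectivity, given a $\partial_\lambda$-cocycle $\alpha = \sum_r \alpha_r \lambda^r$, I would argue by descending induction on the $\lambda$-degree that one can subtract $\partial_\lambda$-coboundaries to reduce $\alpha$ to a single constant term $\alpha_0$ lying in $\ker\partial_1 \cap \ker\partial_2$. The recursion $\partial_2\alpha_r = \partial_1\alpha_{r-1}$ is exactly the condition that lets one split off the top coefficient: one looks for $\beta$ with $\partial_\lambda\beta$ killing the leading $\lambda^{\text{top}}$ term, which amounts to solving $\partial_1\beta_{\text{top}-1} = \alpha_{\text{top}}$ against the constraint $\partial_1\alpha_{\text{top}}=0$. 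This is where the degree restriction $d\geq 2$ must enter: the solvability of these auxiliary equations relies on acyclicity of the individual complexes $(C,\partial_a)$ in low standard degrees, and $d\geq 2$ is presumably the threshold below which $\ker\partial_a / \im\partial_a$ acquires extra classes coming from constants and Casimirs of the hydrodynamic bracket. For injectivity, a symmetric descending-induction argument shows that a constant cocycle which bounds in $C[\lambda]$ already bounds through an $\im\partial_1\partial_2$ element.

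The main obstacle I anticipate is controlling the inductive splitting precisely at the low-degree boundary, i.e.\ verifying that the cohomology of each single-bracket complex $(C,\partial_a)$ vanishes in the relevant bidegrees so that the auxiliary equations $\partial_a \beta = \gamma$ are always solvable once the obvious necessary condition holds. The hydrodynamic brackets $P_1$, $P_2$ are individually exact Poisson structures of hydrodynamic type, so their Poisson cohomology is known to be trivial in positive standard degree (this is essentially the content underlying the Getzler/Dubrovin–Zhang triviality results), and I would invoke exactly this input to close the induction. The restriction to $d\geq 2$ is the natural price: in degrees $d=0,1$ the constants, the Casimirs, and the momentum/Hamiltonian densities obstruct the clean splitting, which is why the isomorphism is only asserted above that threshold. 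Assembling these pieces—well-definedness, then surjectivity and injectivity via the two mirror-image descending inductions—yields the claimed isomorphism.
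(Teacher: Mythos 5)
The paper itself contains no proof of this lemma: it is quoted from Liu and Zhang \cite{lz13}, so there is no internal argument to compare against, and the right benchmark is the proof in that reference. Your reconstruction is, in essence, that proof, and its skeleton is sound: writing $\alpha=\sum_{k=0}^m\alpha_k\lambda^k$, the condition $\partial_\lambda\alpha=0$ is equivalent to $\partial_2\alpha_0=0$, $\partial_2\alpha_k=\partial_1\alpha_{k-1}$ for $1\leq k\leq m$, and $\partial_1\alpha_m=0$; one then strips off the top coefficient by subtracting $\partial_\lambda(\gamma\lambda^{m-1})=\partial_2\gamma\,\lambda^{m-1}-\partial_1\gamma\,\lambda^{m}$ with $\partial_1\gamma=\alpha_m$, and descends to a constant cocycle $\alpha_0\in\ker\partial_1\cap\ker\partial_2$; a mirror-image descent on a primitive $\beta$ of a constant coboundary gives injectivity. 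This is exactly the mechanism in \cite{lz13}.

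Two points in your write-up need repair, though both are fixable. First, your well-definedness witness is wrong as written: for $z=\partial_1\partial_2 w$ the correct primitive is the \emph{constant} polynomial $-\partial_1 w$, since $\partial_\lambda(-\partial_1 w)=-\partial_2\partial_1 w+\lambda\,\partial_1^2 w=\partial_1\partial_2 w$ by the anticommutativity of $\partial_1$ and $\partial_2$; your candidate $\partial_\lambda(\partial_2 w+\lambda\partial_1 w)$ equals $-2\lambda\,\partial_1\partial_2 w$ and yields $z$ for no choice of signs. Second, the cohomological input and the origin of the threshold $d\geq2$ should be stated precisely rather than left as ``presumably'': the only input needed is the vanishing $H^p_d(C,\partial_1)=0$ for $d\geq1$, which holds for a Poisson bracket of hydrodynamic type for both $C=\hcA$ and $C=\hcF$ (cf.\ \cite{g02}, \cite{lz13}). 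Surjectivity uses this vanishing at the bidegree $(p,d)$ of $\alpha_m$ itself, so it already works for $d\geq1$; injectivity, by contrast, must solve $\partial_1\gamma=\beta_m$ (and, at the last step, $\beta_0=\partial_1 w$, whence $z=\partial_2\partial_1 w=-\partial_1\partial_2 w\in\im\partial_1\partial_2$) with $\beta_k$ of bidegree $(p-1,d-1)$, which forces $d-1\geq1$, i.e.\ $d\geq2$. That bidegree bookkeeping, not a vague obstruction by constants, Casimirs and momenta, is the exact location of the threshold. With these two corrections your proposal is a complete proof, coinciding with the one in \cite{lz13}.
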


%
%\begin{remark}
%THE POINT OF THIS REMARK IS TO SEE IF THE PREVIOUS LEMMA CAN BE GENERALIZED, AND GIVE DETAILS OF THE PROOF.
%
%Observe that for pairs $(p,d)$ such that $p=0$ or $d=0$, and $H^p_d(C,\partial_1) =0$, both $BH^p_d(C, \partial_1,\partial_2)$ and $H^p_d(C[\lambda],\partial_\lambda)$ vanish. 
%% since in this case the cohomology coincides with the kernel, which is zero. 
%
%SO THE ISOM IS THERE FOR $p=0$. WHAT HAPPENS FOR $p=1$ ?
%
%For $p,d\not= 0$ the natural embedding of $C$ in $C[\lambda]$, that assigns to $c\in C$ the corresponding degree zero polynomial in $C[\lambda]$, induces a map 
%\begin{equation}
%C^p_d\cap \ker \partial_1 \cap \ker \partial_2 \to H^p_d(C[\lambda],\partial_\lambda)
%\end{equation}
%which is surjective if $H^p_d(C,\partial_1)=0$.
%\end{remark}

As pointed out in~\cite{lz13}, the short exact sequence of complexes
\begin{equation}
0 \to (\hcA[\lambda]/\R[\lambda], D_\lambda) \to (\hcA[\lambda],D_\lambda) \to (\hcF[\lambda], d_\lambda) \to 0
\end{equation}
implies a long exact sequence in cohomology, which includes 
\begin{equation}
\cdots \to H^p_d(\hcA[\lambda]) \to H^p_d(\hcF[\lambda]) \to 
H^{p+1}_d(\hcA[\lambda]) \to \cdots
\end{equation}
for $p, d\geq0$.
It is clear that if both $H^p_d(\hcA[\lambda], D_\lambda)$ and $H^{p+1}_d(\hcA[\lambda], D_\lambda)$ vanish, then $H_d^p(\hcF[\lambda],d_\lambda)$ vanishes too. 
Our vanishing result for $H^p_d(\hcA[\lambda])$ translates to the following statement for the cohomology of the complex $(\hcF[\lambda], d_\lambda)$.
\begin{corollary}
The cohomology $H^p_d(\hcF[\lambda], d_\lambda)$ vanishes for all bi-degrees $(p,d)$, unless
\begin{equation}
d=0, \dots,n, \quad p=d-1, \dots , d+n,
\end{equation}
or
\begin{equation}
d=n+1, n+2, \quad p=d-1, \dots , d+n-1.
\end{equation}
\end{corollary}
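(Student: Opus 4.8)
The plan is to deduce the statement directly from the long exact sequence in cohomology, using the vanishing region of Theorem~\ref{thm:A} as the sole nontrivial input. The mechanism is the exactness of
\[
\cdots \to H^p_d(\hcA[\lambda]) \to H^p_d(\hcF[\lambda]) \to H^{p+1}_d(\hcA[\lambda]) \to \cdots
\]
at the middle term: as already observed, $H^p_d(\hcF[\lambda],d_\lambda)$ vanishes whenever both flanking groups $H^p_d(\hcA[\lambda],\Dla)$ and $H^{p+1}_d(\hcA[\lambda],\Dla)$ vanish. Reading this contrapositively, a bidegree $(p,d)$ can carry nonzero $\hcF$-cohomology only if $(p,d)$ or $(p+1,d)$ lies in the region $R$ where Theorem~\ref{thm:A} permits $\hcA$-cohomology to survive. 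Thus the admissible region for $\hcF$ is contained in $R$ together with its shift by one step down in the $p$-direction, and the whole proof reduces to computing that union degree by degree.

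I would carry out this bookkeeping as follows. From Theorem~\ref{thm:A}, for each $d=0,\dots,n$ the region $R$ is the interval $p=d,\dots,d+n$, while for $d=n+1,n+2$ it is $p=d,\dots,d+n-1$, and $R$ is empty for all other $d$. For fixed $d$ I then take the union of this interval with the same interval shifted to $p=d-1,\dots,d+n-1$ (respectively $p=d-1,\dots,d+n-2$), coming from the neighbour $H^{p+1}_d$. For $d=0,\dots,n$ the two overlapping intervals combine to $p=d-1,\dots,d+n$, and for $d=n+1,n+2$ they combine to $p=d-1,\dots,d+n-1$; these are exactly the two exceptional ranges in the statement. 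For every other value of $d$ both intervals are empty, so $H^p_d(\hcF[\lambda])=0$ for all $p$, which completes the argument.

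A couple of boundary checks would round off the proof, none of them a genuine obstacle. The displayed long exact sequence is used only for $p,d\geq0$; for negative $p$ or $d$ the spaces $\hcA^p_d$ and $\hcF^p_d$ vanish by construction of the gradings, so those bidegrees lie outside the claimed region automatically. The lower endpoint $p=d-1$ degenerates to $p=-1$ at $d=0$, but $H^{-1}_d(\hcF[\lambda])=0$ trivially, so its inclusion in the stated range is harmless. Indeed, since the substantive cohomological content --- Theorem~\ref{thm:A} and the long exact sequence with its middle-term vanishing criterion --- is already in hand, the corollary itself involves no real difficulty: its entire content is the interval arithmetic comparing $R$ to its one-step $p$-translate, and the only thing demanding care is performing that union correctly in each degree $d$.
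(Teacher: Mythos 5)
Your proposal is correct and follows exactly the paper's route: the paper deduces the corollary from the displayed long exact sequence together with the observation that $H^p_d(\hcF[\lambda],d_\lambda)$ vanishes whenever $H^p_d(\hcA[\lambda],\Dla)$ and $H^{p+1}_d(\hcA[\lambda],\Dla)$ both vanish, leaving the interval arithmetic implicit. Your explicit union of the Theorem~\ref{thm:A} region with its shift by one in the $p$-direction, including the boundary checks at $p=d-1$ for $d=0$, is precisely the bookkeeping the paper omits, and it is carried out correctly.
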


Using the isomorphism of Lemma~\ref{blzlemma} we obtain the vanishing of the bihamiltonian cohomology of $\hcF$.
\begin{corollary}
The bihamiltonian cohomology $BH^p_d(\hcF, d_1, d_2)$ vanishes for all bi-degrees $(p,d)$ with $d\geq2$, unless
\begin{equation}
 d=2, \dots,n, \quad p=d-1, \dots , d+n,
\end{equation}
or
\begin{equation}
d=n+1, n+2, \quad p=d-1, \dots , d+n-1.
\end{equation}
\end{corollary}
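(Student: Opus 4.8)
The plan is to derive this corollary directly from the preceding one by transporting the vanishing result across the Liu--Zhang isomorphism of Lemma~\ref{blzlemma}, specialized to the double complex $(\hcF, d_1, d_2)$. No new computation on the complex itself is required: everything follows formally once the two inputs are assembled.

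First I would apply Lemma~\ref{blzlemma} with $(C, \partial_1, \partial_2) = (\hcF, d_1, d_2)$, obtaining the isomorphism $BH^p_d(\hcF, d_1, d_2) \cong H^p_d(\hcF[\lambda], d_\lambda)$ for all $p \geq 0$ and all $d \geq 2$. This reduces the statement to knowing where the right-hand side vanishes, but only in the range $d \geq 2$; the bound $d \geq 2$ is exactly the restriction under which the lemma holds, which is why the corollary is phrased for $d \geq 2$.

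Next I would invoke the preceding corollary, itself deduced from Theorem~\ref{thm:A} via the long exact sequence attached to the short exact sequence $0 \to (\hcA[\lambda]/\R[\lambda], D_\lambda) \to (\hcA[\lambda], D_\lambda) \to (\hcF[\lambda], d_\lambda) \to 0$. That corollary states that $H^p_d(\hcF[\lambda], d_\lambda)$ vanishes outside the two families $\{d = 0, \dots, n,\ p = d-1, \dots, d+n\}$ and $\{d = n+1, n+2,\ p = d-1, \dots, d+n-1\}$. Intersecting with $d \geq 2$ excises the values $d = 0, 1$ from the first family and leaves it as $d = 2, \dots, n$, $p = d-1, \dots, d+n$, while the second family is untouched. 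Because the isomorphism of Lemma~\ref{blzlemma} respects both the super degree $p$ and the standard degree $d$, the same exceptional set governs $BH^p_d(\hcF, d_1, d_2)$, which is precisely the asserted statement.

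The argument is short, so there is no genuine analytic obstacle; the only point demanding attention is the bookkeeping of the degree ranges. One must verify that truncating to $d \geq 2$ does not silently drop an exceptional bi-degree that actually survives, and that the endpoints of the $p$-intervals transfer verbatim through the isomorphism. Since both degrees are preserved and the truncation only removes $d \in \{0,1\}$ from a range whose $p$-interval is otherwise unchanged, this is immediate and requires no further computation.
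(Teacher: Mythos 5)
Your proposal is correct and is exactly the paper's argument: the paper deduces this corollary in one line by applying Lemma~\ref{blzlemma} with $(C,\partial_1,\partial_2)=(\hcF,d_1,d_2)$ to the preceding corollary on $H^p_d(\hcF[\lambda],d_\lambda)$, with the restriction $d\geq 2$ coming from the range of validity of the lemma. Your bookkeeping of the degree ranges (truncating the first family to $d=2,\dots,n$ and leaving the second untouched) matches the statement precisely.
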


\begin{remark}
Notice that in particular it follows that
\begin{equation}
BH^2_{\geq4}(\hcF, d_1,d_2) = 0,\qquad 
BH^3_{\geq5}(\hcF, d_1, d_2) = 0 .
\end{equation}
The vanishing of the second cohomology for $d\geq4$ has been already proved in~\cite{lz05, dlz06}, together with the results
\begin{equation}
 BH^2_{2}(\hcF, d_1,d_2) = 0, \qquad  
 BH^2_{3}(\hcF, d_1,d_2) \cong \bigoplus_{i=1}^n C^\infty (\R) .
\end{equation}
The vanishing of the third cohomology for $d\geq5$ is new, and is the most relevant for the  extension problem of deformation theory.
\end{remark}

\subsection{Bihamiltonian cohomology and deformations}
The deformation problem can be formulated in $\hcF$ as follows. 
Let $P^0_\lambda=P^0_2 - \lambda P^0_1 \in \hcF^2_{1}[\lambda]$ be a semisimple Poisson pencil of hydrodynamic type. 
A deformation of $P^0_\lambda$ is given by $P_\lambda = P_2- \lambda P_1 \in \hcF_{\geq1}^2[\lambda]$ with $P_\lambda - P_\lambda^0 \in \hcF^2_{\geq2}$ and $[P_\lambda,P_\lambda]=0$.
%Two deformations $P_\lambda$, $Q_\lambda$ of $P^0_\lambda$ are equivalent if $P_\lambda = e^{\mathrm{ad}_X} Q_\lambda$ for $X\in \hcF^1_{\geq1}$. 
An infinitesimal deformation of $P^0_\lambda$ is given by 
\begin{equation}
P_\lambda^{\leq2} = P^0_\lambda + P^1_\lambda +P^2_\lambda, \quad P_\lambda^{d-1} = P_2^{d-1} - \lambda P_1^{d-1} \in \hcF^2_d [\lambda] 
\end{equation}
such that 
\begin{equation}
[ P_\lambda^{\leq2} , P_\lambda^{\leq2}] \in \hcF^3_{\geq5}[\lambda] .
\end{equation}
A deformation $P_\lambda$ of $P_\lambda^0$ extends a infinitesimal deformation $P_\lambda^{\leq2}$ if $P_\lambda - P^{\leq2}_\lambda \in \hcF^3_{\geq4}[\lambda]$.

The extension problem can be stated as follows: 
{\it given an infinitesimal deformation $P_\lambda^{\leq2}$ of $P_\lambda^0$, there exists a deformation $P_\lambda$ extending $P_\lambda^{\leq2}$ ?}

As pointed out in~\cite{lz13} the fact that the second bihamiltonian cohomology groups $BH^2_d(\hcF, d_1, d_2)$ vanish for $d\geq4$ and $d=2$ but not for $d=3$, implies that the vanishing of $BH^3_{\geq6}(\hcF,d_1,d_2)$ guarantees that any infinitesimal deformation $P_\lambda^{\leq2}$ can be extended to a full deformation $P_\lambda$. 

This result, expressed in the $\delta$-formulation of local bivectors, is Theorem~\ref{thm:main}. 

To see that the vanishing of $BH^3_{\geq 6}(\hcF)$ implies that the deformations are unobstructed, consider that any infinitesimal deformation, thanks to the triviality theorem for the deformations of a single Poisson structure~\cite{g02, dz01, dms05, lz11}, can be put in the form
\begin{equation}
P_1^{\leq 2} = P_1^0, \quad P_2^{\leq 2} = P_2^0 + P_2^2 
\end{equation}
by an appropriate Miura transformation. Clearly $P_2^2$ is in the kernel of both $d_1$ and $d_2$, hence identifies an element of $BH^2_3(\hcF)$. Let us look for a deformation of the form
\begin{equation} \label{evendef}
P_1 = P_1^0 , \quad P_2 = P_2^0 + P_2^2+P_2^4 +P_2^6 + \dots 
\end{equation}
Let us first show that exist a term $P_2^4$ such that 
\begin{equation} \label{defeq1}
d_1 P_2^4 = 0, \quad  d_2 P_2^4 +\frac12 [ P_2^2 , P_2^2 ] =0 .
\end{equation}
Clearly $[P_2^2,P_2^2] \in \hcF^3_6$ is in $\ker d_1 \cap \ker d_2$, hence the vanishing of $BH^3_6(\hcF)$ implies that 
\begin{equation} 
d_1 d_2 Q_1 = \frac12 [P_2^2,P_2^2] 
\end{equation}
for some $Q_1 \in \hcF^1_4$. Then  $P_2^4 = d_1 Q_1$ gives a solution of~\eqref{defeq1}. 

At the next step of the deformation we want to find $P_2^6$ such that
\begin{equation} \label{defeq2}
d_1 P_2^6 =0, \quad d_2 P_2^6 + [P_2^2, P_2^4] =0 .
\end{equation}
As before $[P_2^2,P_2^4] \in \hcF^3_8$ is in $\ker d_1 \cap \ker d_2$, hence the vanishing of $BH^3_8(\hcF)$ implies that there is an element $Q_2\in\hcF^1_6$ s.t. $d_1 d_2 Q_2 = [P_2^2,P_2^4]$. Then setting $P_2^6 = d_1 Q_2$ gives us the required solution of~\eqref{defeq2}.

Since the vanishing of  $BH_{\geq6}^3(\hcF)$ ensures that this procedure can be continued indefinitely, as proved by induction in~\cite{lz13}, the existence of a full deformation extending the infinitesimal deformation $P_\lambda^{\leq2}$ indeed follows. 

Notice that this in particular implies that any deformation can be put in the form~\eqref{evendef}, i.e., with the first Poisson tensor undeformed and the second one having only odd standard degree terms. This fact was also proved independently of the vanishing of the third bihamiltonian cohomology in~\cite{dlz06}.

\section{Filtrations and spectral sequences}
\label{sec:filtrations}

In this Section we give a proof of Theorem~\ref{thm:A}. It is a rather technical argument: basically, we introduce a sequence of filtrations  and study the associated spectral sequences in order to show the vanishing of the cohomology in some degrees. 

\subsection{Strategy of the proof of Theorem~\ref{thm:A}}
\label{strategy}
Before we start with the technical details, let us make some general remarks about the strategy of the proof. 
The theorem states that the 
cohomology of the complex $(\hcA[\lambda],\Dla)$ vanishes in certain degrees $(p,d)$. 
We prove this vanishing by introducing several spectral sequences associated to certain filtrations. 

Central to this derivation is the following simple principle: suppose we have a cochain complex $(C,d)$
with a bounded decreasing filtration 
\[
\ldots \subset F^{p+1}C\subset F^pC\subset\ldots
\]
The associated spectral sequence is bounded and converges to 
\[
E_1^{p,q}=H^{p+q}(F^pC\slash F^{p+1}C,d_0)\Longrightarrow H^{p+q}(C,d),
\]
where $d_0$ is the induced differential on the zeroth page, i.e. the associated graded complex.
%This means that 
%\[
%E_\infty^{p,q} \cong \frac{F^p H^{p+q}(C,d)}{F^{p+1} H^{p+q}(C,d)}
%\]
%where $E_\infty^{p,q}$ is the stable value and $F^p H(C,d)$ is the bounded filtration induced on the cohomology.
Suppose now that $H^p(E_k,d_k)=0$ for some $k\geq 0$.
Since all higher pages are iterated subquotients of $E_k$, we see that in this case $H^p(C,d)=0$.

In the proof we will apply this principle inductively: we introduce a filtration on the complex 
$(\hcA[\lambda],\Dla)$ which induces a spectral sequence. To show the vanishing of the cohomology of the 
first page $E_1$, we introduce another filtration on this page, which induces a spectral sequence
converging to the second page $E_2$ of the previous spectral sequence. In this way, we apply in total a sequence of three filtrations. For the convenience of the reader, we list them below:
\begin{itemize}
\item[1.] The first filtration is associated with the degree of monomials in $\hcA[\lambda]$ in the variables 
$u^{i,s}$, $s\geq 1$, i.e. we assign degree $1$ to each $u^{i,s}$, $s\geq 1$. The differential  on the zeroth page of the associated spectral sequence is the part of $\Dla$ that preserves this degree, whereas on the first page it is the part that decreases it by $1$.
\item[2.] On the first page of the spectral sequence above, we consider the filtration given by the degree in $\theta_i^1$, for all $i=1,\ldots,n$, i.e., this time $\deg(\theta_i^1)=1,~i=1,\ldots,n$. On the zeroth page, the differential is given by the part of the differential in 1 (at the first page) that increases the number of $\theta^1_i$ by $1$.
\item[3.] The complex on the zeroth page in 2 splits as direct sum of complexes $\hcC_i,~i=1,\ldots,n$.
To compute the cohomology of the subcomplex $\hcC_i$, we filter by the degree of monomials in $\theta_i^1$, where this time $i$ is fixed. On the zeroth page of the spectral sequence we finally find a complex of which we can prove the vanishing of the cohomology in the relevant degrees.
\end{itemize}
Having obtained the vanishing of the cohomology in 3, we apply the principle stated above to argue that the first page of the spectral sequence in 2 vanishes in certain degree. The same argument gives  the vanishing of the second page of the spectral sequence in 1, which in turn proves the vanishing of the cohomology of the original complex $(\hcA[\lambda],\Dla)$ in certain degrees, i.e. Theorem \ref{thm:A}.

\subsection{Grading and subcomplexes}

Since $\theta^s_i$  are odd variables, we have a restriction on possible gradings $p$ and $d$ on the space $\hcA$ . Indeed, the minimal possible  standard degree $d$ of a monomial in $\theta^s_i$ of super degree $p=nq+r$, $r<n$ is the degree of $\theta_1^0\cdots\theta_n^0\cdots\theta_1^{q-1}\cdots\theta_n^{q-1} \theta_{i_1}^q \cdots \theta_{i_r}^q$ equal to $n(0+\cdots+(q-1))+rq=nq(q-1)/2+rq$. So, the finitely generated $C^\infty (U)$-module $\hcA^p_d$ is zero for 
\begin{equation} \label{ineqqq}
d<nq(q-1)/2+rq.
\end{equation}
Moreover, the operators $D_1$ and $D_2$ (and, correspondingly, $d_1$ and $d_2$) are of bi-degree $(p,d)$ equal to  $(1,1)$. Therefore, the difference $p-d$ is preserved by both operators, and this means that the  space $\hcA$ can be seen as the completion of an infinite direct sum of subcomplexes indexed by difference $d-p=-n,-n+1,\dots$. The inequality~\eqref{ineqqq} implies that each of these subcomplexes is finite, and consequently we can compute the cohomology of each of them separately.
For this general reason all the filtrations and spectral sequences introduced below will be bounded, and consequently the spectral sequences will converge in a finite number steps.

\subsection{The first filtration}

Let us define a degree $\deg_u$ on $\hcA[\lambda]$ by assigning degree one to $u^{i,s}$ with $s\geq 1$ and degree zero to the remaining generators. 

The differential $D_{\lambda}$ splits in homogeneous components $\Delta_k$ with respect to the degree $\deg_u$, i.e., 
\begin{equation}
\Dla= \Delta_{-1}+\Delta_0+\dots,
\end{equation}
where $\deg_u \Delta_k = k$. 

One can easily check from~\eqref{diff-biham} that the only term that lowers the degree $\deg_u$ is 
%where where $\Delta_{-1}$ decreases $q$ by 1, $\Delta_0$ preserves $q$, and $\Delta_t$, $t\geq 1$, increases $q$ by $t$. Explicitly, we see from equation \eqref{diff-biham}, that there is only one term that lowers degree $q$, so that we have:
\begin{equation}
\label{formula-0}
\Delta_{-1}  = \sum_{s\geq 1}  (-\lambda+u^i)f^i\theta_i^{s+1}\frac{\d}{\d u^{i,s}}.
\end{equation}

The terms in \eqref{diff-biham} that preserve the degree $\deg_u$ are:
% re-checked this formula on 24/7 gc
\begin{align}
\Delta_0 & = 
(-\lambda+u^i)f^i\theta_i^{1}\frac{\d}{\d u^i} 
\\ \notag &
+
\sum_{\substack{
s=a+b \\
s, a \geq 1;
b\geq 0
}} (-\lambda+u^i) \binom {s}{b} \partial_j f^i u^{j,a} \theta_i^{1+b} \frac{\d}{\d u^{i,s}}
+ \sum_{\substack{
s=a+b \\
s, a \geq 1;
b\geq 0
}} \binom {s}{b} f^i u^{i,a} \theta_i^{1+b} \frac{\d}{\d u^{i,s}}
\\ \notag
& +\frac{1}{2}\sum_{\substack{
s=a+b \\
s \geq 1;
a,b\geq 0
}}  (-\lambda+u^i) \binom{s}{b}
\d_jf^i u^{j,1+a} \theta_i^b
\frac{\d}{\d u^{i,s}}
+\frac{1}{2}\sum_{\substack{
s=a+b \\
s \geq 1;
a,b\geq 0
}}  \binom{s}{b}
f^i u^{i,1+a} \theta_i^b
\frac{\d}{\d u^{i,s}}
\\ \notag
& +\frac{1}{2}\sum_{\substack{
s=a+b \\
s \geq 1;
a,b\geq 0
}}  (-\lambda+u^i) \binom{s}{b}  
f^i\frac{\d_i f^j}{f^j} u^{j,1+a}\theta_j^b 
\frac{\d}{\d u^{i,s}}
+\frac{1}{2}\sum_{\substack{
s=a+b \\
s \geq 1;
a,b\geq 0
}}  \binom{s}{b}  
f^i u^{i,1+a}\theta_i^b 
\frac{\d}{\d u^{i,s}}
\\ \notag
& -\frac{1}{2}\sum_{\substack{
s=a+b \\
s \geq 1;
a,b\geq 0
}}
 (-\lambda+u^j) \binom{s}{b} 
f^j\frac{\d_j f^i}{f^i} u^{i,1+a}\theta_j^b
\frac{\d}{\d u^{i,s}}
-\frac{1}{2}\sum_{\substack{
s=a+b \\
s \geq 1;
a,b\geq 0
}}
\binom{s}{b} 
f^i u^{i,1+a}\theta_i^b
\frac{\d}{\d u^{i,s}}
\\ \notag
& + \frac{1}{2} \sum_{\substack{
s=a+b \\
s,a,b\geq 0
}}  
(-\lambda+u^j)\binom{s}{b}
\d_if^j  \theta_j^a \theta_j^{1+b} 
 \frac{\d}{\d \theta_i^s}
+ \frac{1}{2} \sum_{\substack{
s=a+b \\
s,a,b\geq 0
}}  
\binom{s}{b}
f^i  \theta_i^a \theta_i^{1+b} 
 \frac{\d}{\d \theta_i^s}
\\ \notag
&
+ \frac{1}{2} \sum_{\substack{
s=a+b \\
s,a,b\geq 0
}}  
(-\lambda+u^j)\binom{s}{b}
f^j\frac{\d_j f^i}{f^i} \theta_i^a \theta_j^{1+b}   \frac{\d}{\d \theta_i^s}
+\frac{1}{2} \sum_{\substack{
s=a+b \\
s,a,b\geq 0
}}  
\binom{s}{b}
f^i \theta_i^a \theta_i^{1+b}   \frac{\d}{\d \theta_i^s}
\\ \notag
&
- 
\frac{1}{2} \sum_{\substack{
s=a+b \\
s,a,b\geq 0
}}  
(-\lambda+u^j)\binom{s}{b}
f^j\frac{\d_j f^i}{f^i} \theta_j^a \theta_i^{1+b}
 \frac{\d}{\d \theta_i^s}
- 
\frac{1}{2} \sum_{\substack{
s=a+b \\
s,a,b\geq 0
}}  
\binom{s}{b}
f^i \theta_i^a \theta_i^{1+b}
 \frac{\d}{\d \theta_i^s} .
\end{align}

Suppose we introduce a filtration of $\hcA[\lambda]$ related to the degree $\deg_u$.
The usual decreasing filtration associated with $\deg_u$ is defined as follows: let $\tilde{F}^r \hcA[\lambda]$ be the subspace of $\hcA[\lambda]$ of elements with homogenous components of $\deg_u$ greater of or equal to $r$. This filtration however is not preserved by $D_\lambda$, because of the presence of the term of negative degree $\Delta_{-1}$. 

Let us instead consider the decreasing filtration $F \hcA[\lambda]$ of $\hcA[\lambda]$ associated with the degree $\deg_u + \deg_\theta$. Let $F^r \hcA[\lambda]$ be given by the elements in $\hcA[\lambda]$ with homogeneous components in $\deg_u + \deg_\theta$ of degree $\geq r$, where $\deg	_\theta$ is  the super gradation defined in \S~\ref{basic-definitions}. We get
\begin{equation}
\label{flit-complex}
\dots\subset F^{2}\hcA[\lambda] \subset F^{1}\hcA[\lambda] \subset F^{0}\hcA[\lambda] = \hcA[\lambda].
\end{equation}
Since $\deg_\theta D_\lambda = 1$, the differential $D_\lambda$ preserves this filtration. Notice that this filtration, when restricted to each subcomplex $\hcA^p_d[\lambda]$ with fixed difference $d-p$, is bounded.

Let us now consider the spectral sequence associated with the filtration $F \hcA[\lambda]$. Since such filtration comes from a grading, the zeroth page is given by
\[
E_0 = \bigoplus_{p,q}E_0^{p,q} = \bigoplus_{p,q} F^p\hcA^{p+q}[\lambda]\slash F^{p+1}\hcA^{p+q}[\lambda]\cong \hcA[\lambda] 
\]
and the induced differential by $\Delta_{-1}$, i.e., 
\begin{equation}
(E_0,d_0)=(\hcA[\lambda],\Delta_{-1}) .
\end{equation}
To obtain the first page of the spectral sequence we need therefore to compute the cohomology of the complex $( \hcA[\lambda], \Delta_{-1} )$. 

We define
\begin{equation}
\hcC:=C^\infty(U)[[\theta^0_1,\dots,\theta^0_n,\theta^1_1,\dots,\theta^1_n]],
\end{equation}
and
\begin{equation}
\hcC_i := \hcC[[ \{ u^{i,s}, \theta_i^{s+1}, s\geq1 \} ]]. 
\end{equation}
On $\hcC_i$, we denote by $\hat{d}_i$ the de Rham differential
\[
\hat{d}_i = \sum_{s\geq1} \theta_i^{s+1} \frac{\partial }{\partial u^{i,s}}  .
\]
% checked and corrected from here 30/3:
\begin{proposition} \label{Hd0}
The cohomology of $\Delta_{-1}$ is given by
\[
H (\hcA[\lambda] , \Delta_{-1}) \cong \hcC[\lambda] \oplus \bigoplus_{i=1}^n \im  \left( \hat{d}_i : \hcC_i \to \hcC_i \right) .
\]
\end{proposition}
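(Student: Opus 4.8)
The plan is to recognize $\Delta_{-1}$ as a sum of \emph{twisted} de Rham differentials and to exploit that the twisting functions are pairwise coprime precisely because the pencil is semisimple. Writing $c_i:=(-\lambda+u^i)f^i$, formula \eqref{formula-0} reads $\Delta_{-1}=\sum_{i=1}^n c_i\,\hat d_i$, where $\hat d_i=\sum_{s\geq1}\theta_i^{s+1}\partial/\partial u^{i,s}$ acts only on the variables $u^{i,s},\theta_i^{s+1}$ ($s\geq1$), which I call the active variables of index $i$. The operators $\hat d_i$ pairwise anticommute, each $c_i$ is central and annihilated by every $\hat d_k$, each $f^i$ is a unit of $\hcC$, and $c_i$ is a non-zero-divisor in $\hcC[\lambda]$ (its leading coefficient $-f^i$ is a unit) with $(c_i)=(\lambda-u^i)$. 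First I would decompose $\hcA[\lambda]=\bigoplus_{S\subseteq\{1,\dots,n\}}\hcA[\lambda]_S$ according to the set $S$ of indices whose active variables occur in a given monomial. Since the $j$-th summand $c_j\hat d_j$ sends an active-$j$ variable to an active-$j$ variable and annihilates monomials containing no $u^{j,s}$, each $\hcA[\lambda]_S$ is a subcomplex, and it suffices to compute $H(\hcA[\lambda]_S,\Delta_{-1})$ for each $S$ (the sum being finite in each fixed bidegree).

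The two extreme cases are immediate. For $S=\emptyset$ the differential vanishes and $H=\hcC[\lambda]$, the first summand. For a singleton $S=\{i\}$ the differential is $c_i\hat d_i$ on the augmentation ideal of the index-$i$ active variables; by the formal Poincaré lemma $\hat d_i$ is acyclic there, so that $\ker(c_i\hat d_i)=\ker\hat d_i=\im\hat d_i$ (using that $c_i$ is a non-zero-divisor), whence
\[
H(\hcA[\lambda]_{\{i\}},c_i\hat d_i)=\im\hat d_i\big/\,c_i\im\hat d_i\cong \im\hat d_i\otimes_{\hcC[\lambda]}\hcC[\lambda]/(\lambda-u^i)\cong \im\bigl(\hat d_i:\hcC_i\to\hcC_i\bigr),
\]
the evaluation at $\lambda=u^i$ being legitimate because $f^i$ is a unit, so that $(c_i)=(\lambda-u^i)$ and $\hcC[\lambda]/(\lambda-u^i)\cong\hcC$. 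This produces exactly the $i$-th summand of the claimed answer.

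The heart of the argument is the vanishing $H(\hcA[\lambda]_S,\Delta_{-1})=0$ for $|S|\geq2$, and this is where semisimplicity enters. Fix two distinct indices $i,j\in S$. From the formal Poincaré lemma I would take a homotopy $h_i$ with $\hat d_i h_i+h_i\hat d_i=\mathrm{id}$ on the index-$i$ augmentation ideal, extended to $\hcA[\lambda]$ as $h_i\otimes\mathrm{id}$ (Koszul sign rule) with respect to the factorization separating the index-$i$ active variables, and likewise $h_j$. Because $h_i$ and $\hat d_k$ ($k\neq i$) act on disjoint sets of odd variables they anticommute, so on $\hcA[\lambda]_S$ (where the active-$i$-degree is $\geq1$, since $i\in S$) one obtains the operator identity $\Delta_{-1}h_i+h_i\Delta_{-1}=c_i\,\mathrm{id}$, and similarly $\Delta_{-1}h_j+h_j\Delta_{-1}=c_j\,\mathrm{id}$. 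Now the distinctness of the canonical coordinates, $u^i-u^j\neq0$ on $U$, makes $c_i$ and $c_j$ comaximal: explicitly $1=a c_i+b c_j$ with $a=\tfrac{1}{(u^i-u^j)f^i}$ and $b=-\tfrac{1}{(u^i-u^j)f^j}$ in $\hcC$. Since $a,b$ are central and $\hat d$-closed, $H:=a h_i+b h_j$ satisfies $\Delta_{-1}H+H\Delta_{-1}=a c_i+b c_j=\mathrm{id}$, so $\hcA[\lambda]_S$ is contractible. Summing over $S$ yields $H(\hcA[\lambda],\Delta_{-1})\cong\hcC[\lambda]\oplus\bigoplus_{i=1}^n\im(\hat d_i:\hcC_i\to\hcC_i)$.

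The main obstacle is not conceptual but one of bookkeeping: I must pin down the formal Poincaré lemma together with an explicit $\hcC[\lambda]$-linear homotopy $h_i$ for $\hat d_i$ that stays inside the augmentation ideal and touches only the index-$i$ active variables, so that the anticommutation relations hold on the nose, the support $S$ is stabilized, and the signs in $\Delta_{-1}h_i+h_i\Delta_{-1}=c_i\,\mathrm{id}$ coming from the oddness of the $\theta$'s are correct. The genuinely essential input, however, is the semisimplicity hypothesis $u^i\neq u^j$: it is exactly what produces the coprimality relation $1=ac_i+bc_j$, hence the contracting homotopy, and thus what forces all mixed contributions ($|S|\geq2$) to disappear, leaving only the single-index torsion modules $\im(\hat d_i:\hcC_i\to\hcC_i)$ together with the constant part $\hcC[\lambda]$.
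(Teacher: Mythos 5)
Your proposal is correct, and it reorganizes the computation along genuinely different lines from the paper. The paper proves Proposition~\ref{Hd0} by explicit homotopies: $h_{i,s}=\sigma_i\frac{1}{u^i-\lambda}\frac{1}{f^i}\frac{\partial}{\partial\theta_i^{s+1}}\int du^{i,s}$, whose defect term contains the evaluation $\pi_{\lambda-u^i}$, and the two-index homotopies $h_{i,s;j,t}$ carrying the factor $\frac{1}{(u^i-u^j)f^if^j}$; composing the resulting projections ($p_I$, then $p_{II}$) drives an arbitrary cocycle into the normal form $\hcC[\lambda]\oplus\bigoplus_i\hat{d}_i(\hcC_i)$, after which a separate argument shows these representatives cannot be reduced further modulo $\im\Delta_{-1}$. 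You instead split $\hcA[\lambda]=\bigoplus_S\hcA[\lambda]_S$ by the support of the active variables --- a legitimate decomposition into subcomplexes, finite in each bidegree, since each $c_j\hat{d}_j$ preserves the support --- and compute each piece exactly: $\hcC[\lambda]$ for $S=\emptyset$; for $S=\{i\}$, the non-zero-divisor property of $c_i$ (leading coefficient $-f^i$ a unit) plus the Poincar\'e lemma give $\im\hat{d}_i[\lambda]\big/(\lambda-u^i)\im\hat{d}_i[\lambda]\cong\im\hat{d}_i$, which is the same evaluation at $\lambda=u^i$ that appears in the paper inside $p_{i,s}$, repackaged as a torsion-module computation; and for $|S|\geq2$ your comaximality identity $1=ac_i+bc_j$, available exactly because $u^i-u^j\neq0$ on $U$, assembles the per-index homotopies into a contraction --- this is the paper's $h_{i,s;j,t}$ trick in Bezout form. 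The bookkeeping you defer does close: take $\iota_i=\sum_{s\geq1}u^{i,s}\,\partial/\partial\theta_i^{s+1}$ and the Euler operator $E_i$ counting active-$i$ variables; then $\hat{d}_i\iota_i+\iota_i\hat{d}_i=E_i$, so $h_i:=\iota_iE_i^{-1}$ is odd, $E_i$-homogeneous, preserves each $\hcA[\lambda]_S$, anticommutes with $\hat{d}_j$ for $j\neq i$ and commutes with multiplication by $c_j$, giving $\Delta_{-1}h_i+h_i\Delta_{-1}=c_i\,\mathrm{id}$ on the nose where $E_i\geq1$. What your route buys is a cleaner endgame: because the cohomology is computed exactly on each summand, the paper's closing discussion that the representatives $\sum_i\tilde{g}_i$ admit no further simplification becomes unnecessary. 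What the paper's heavier machinery buys is explicitness needed downstream: the concrete projection $p=p_{II}\circ p_I$ and inclusion $i$ are reused to compute the induced differential $\Delta_0'=p\circ\Delta_0\circ i$ on the first page of the spectral sequence (Corollary~\ref{pcor}, Lemma~\ref{diff}), so if you adopt your version you must still extract an equally explicit projector onto your summands before the next stage of the argument.
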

Before we start the proof of this proposition, as a preliminary step, we observe that the cohomology of the de Rham complex $(\hcC_i, \hat{d}_i)$ is trivial in positive degree.
\begin{lemma}[``Poincar\'e lemma'']
\[
H(\hcC_i, \hat{d}_i) = 
\hcC .
%\begin{cases} \mathcal{C}&k=0,\\ 0&k>0.\end{cases}
\]
\end{lemma}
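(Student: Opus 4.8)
The strategy is the standard algebraic Poincar\'e lemma, realized through a contracting homotopy built from the Euler (radial) contraction. I fix the index $i$ and regard $(\hcC_i,\hat{d}_i)$ as a Koszul--de Rham complex in which the even variables $u^{i,s}$, $s\geq1$, play the role of coordinates and the odd variables $\theta_i^{s+1}$, $s\geq1$, the role of their differentials $\hat{d}_i u^{i,s}=\theta_i^{s+1}$, while the coefficient ring $\hcC$ is annihilated by $\hat{d}_i$.

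First I would introduce the interior-product operator
\[
K := \sum_{s\geq1} u^{i,s}\,\frac{\partial}{\partial \theta_i^{s+1}},
\]
the algebraic counterpart of $\iota_E$ for the Euler vector field $E=\sum_s u^{i,s}\partial/\partial u^{i,s}$. Both $\hat{d}_i$ and $K$ are odd super-derivations, so their anticommutator is an even derivation. A short computation on the generators (Cartan's magic formula) would give
\[
N := \hat{d}_i K + K \hat{d}_i = \sum_{s\geq1}\left(u^{i,s}\frac{\partial}{\partial u^{i,s}} + \theta_i^{s+1}\frac{\partial}{\partial \theta_i^{s+1}}\right),
\]
the operator that multiplies each monomial by its \emph{weight} $w$, defined as its total degree in the $u^{i,s}$ plus the number of $\theta_i^{s+1}$ factors ($s\geq1$). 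The only point needing care here is the sign bookkeeping for the two odd derivations; evaluating $N$ on the generators $u^{i,m}$ and $\theta_i^{m+1}$, where it returns the generator itself, and on $\hcC$, where it vanishes, settles the formula.

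Next I observe that $\hat{d}_i$ preserves the weight $w$ (it replaces one $u^{i,s}$ of weight $1$ by one $\theta_i^{s+1}$ of weight $1$), so that $N$ commutes with $\hat{d}_i$ and the complex decomposes as the product of its weight-$w$ subcomplexes. On the weight-$w$ subspace with $w\geq1$ the operator $N$ acts as multiplication by $w\neq0$, so $h:=\tfrac1w K$ satisfies $\hat{d}_i h + h\,\hat{d}_i=\mathrm{id}$; this contracting homotopy shows that the weight-$w$ part is acyclic for every $w\geq1$. The weight-zero subspace consists precisely of the monomials involving no $u^{i,s}$ and no $\theta_i^{s+1}$, i.e. $\hcC$, sitting in $\hat{d}_i$-degree $0$, where $\hat{d}_i$ vanishes and there are no coboundaries. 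Assembling the two cases yields $H(\hcC_i,\hat{d}_i)=\hcC$.

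The only genuine subtlety is the completion: since $s$ is unbounded, each fixed weight $w$ still involves infinitely many monomials, so $\hcC_i$ is the direct product of its weight components rather than their direct sum. I would note that $K$, and hence $h$, is nevertheless well-defined on the completed space---applied to any power series it produces another power series, acting term by term---and that $\hat{d}_i$ and $h$ both respect the weight grading, so cohomology may be computed weight by weight and commutes with the product. This justifies passing from the weightwise acyclicity to the stated result. I expect this completion point, together with the sign check for $N$, to be the only places demanding attention; the remainder is the classical homotopy argument.
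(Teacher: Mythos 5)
Your proof is correct: the anticommutator computation $\hat{d}_i K + K\hat{d}_i = N$ checks out on the generators $u^{i,m}$, $\theta_i^{m+1}$ and on $\hcC$, the weight grading is preserved by $\hat{d}_i$, and the rescaled contraction $h=\tfrac1w K$ on each weight-$w$ eigenspace with $w\geq1$ gives acyclicity there, leaving exactly $\hcC$ in weight zero (in particular $\im\hat{d}_i\cap\hcC=0$ comes for free, since weight-zero coboundaries would have to come from weight zero, where $\hat{d}_i$ vanishes). Your treatment of the completion — $K$ well-defined termwise, the complex a direct product of weight subcomplexes, cohomology commuting with the product — closes the only real gap. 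The paper takes a mechanically different route: instead of a single Euler-type homotopy with eigenvalue rescaling, it uses one homotopy per pair of variables, $h_{i,s}=\frac{\partial}{\partial\theta_i^{s+1}}\int du^{i,s}$, satisfying $h_{i,s}\hat{d}_i+\hat{d}_ih_{i,s}=1-\pi_{u^{i,s}}\pi_{\theta_i^{s+1}}$, and kills the variables pair by pair (the infinite iteration is harmless because each standard-degree component involves only finitely many variables). The per-variable version is chosen deliberately: it is the template that the paper then deforms, in the proof of Proposition~\ref{Hd0}, into the $\lambda$-dependent homotopies $h_{i,s}=\sigma_i\frac{1}{u^i-\lambda}\frac{1}{f^i}\frac{\partial}{\partial\theta_i^{s+1}}\int du^{i,s}$ and the two-index operators $h_{i,s;j,t}$, and whose associated projections $p_{i,s}$ assemble into the maps $p_I$, $p_{II}$ underlying Corollary~\ref{pcor}. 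Your Euler homotopy is cleaner and more conceptual for the lemma in isolation, but it would not extend as directly to $\Delta_{-1}=\sum_i(\lambda-u^i)f^i\hat{d}_i$, whose non-constant coefficients $(\lambda-u^i)f^i$ obstruct a naive eigenvalue rescaling — that is precisely what the $\sigma_i$ device in the paper's per-variable homotopy is designed to handle.
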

\begin{proof}
A simple proof of this fact can be given in terms of an homotopy map, a procedure that we will use repeatedly in the following.
For fixed $i=1, \dots ,n$ and $s\geq1$, let
\begin{equation}
h_{i,s} = \frac{\partial }{\partial \theta_i^{s+1}} \int du^{i,s},
\end{equation}
where the integration constant is set to zero. We have
\begin{equation}
h_{i,s} \hat{d}_i + \hat{d}_i h_{i,s} = 1-\pi_{u^{i,s}} \pi_{\theta_i^{s+1}}
\end{equation}
where $\pi_p$ denotes the projection that sets the variable $p$ to zero. 
Given a cocycle $g \in\hcC_i$ representing a cohomology class $[g]$, the previous formula implies that the same cohomology class can be represented by the cycle $\pi_{u^{i,s}} \pi_{\theta_i^{s+1}} g$. Repeating this process, we can kill all variables $u^{i,s}$, $\theta_i^{s+1}$ with $s\geq1$, hence a cohomology class can be always represented by an element in $\hcC$. Since $\im \hat{d}_i$ always contains $\theta_i^{s+1}$ with $s\geq 1$, no further simplification is possible.
\end{proof}
\begin{proof}[Proof of Proposition~\ref{Hd0}]

To prove Proposition~\ref{Hd0} we introduce two homotopy maps. 
Let $\sigma_i$ be the map that acts on a rational function of $\lambda$ by removing the polar part at $\lambda=u^i$. For a polynomial $p(\lambda)$ we have
\begin{equation}
\sigma_i\left( \frac{p(\lambda)}{\lambda- u^i} \right) = \frac{p(\lambda) - p(u^i) }{\lambda-u^i} .
\end{equation}
Fix $i=1,\dots ,n$ and $s\geq1$ and let
\begin{equation}
h_{i,s}=\sigma_i \frac1{u^i - \lambda}\frac1{f^i} \frac{\partial }{\partial \theta_i^{s+1}} \int du^{i,s} .
\end{equation}
Clearly $h_{i,s}$ defines a map on $\hcA[\lambda]$ which satisfies
\begin{equation}
h_{i,s} \Delta_{-1} + \Delta_{-1} h_{i,s} =  1- p_{i,s},   \label{19}
\end{equation}
where
\begin{align}
p_{i,s} :=  &\pi_{u^{i,s}} \pi_{\theta_i^{s+1}} + \Big(
  \sum_{\substack{ t\geq1\\ j} } \frac{f^j}{f^i}  \theta_j^{t+1} \frac{\partial }{\partial \theta_i^{s+1}} \frac{\partial }{\partial u^{j,t}} \int d u^{i,s}   \Big) \pi_{\lambda -u^i} .
\end{align}
As before $\pi_p$ denotes the projection that sets the variable $p$ to zero, in particular $\pi_{\lambda-u^i}$ sets $\lambda$ to $u^i$. 
% above formulas checked again 2/8/16

In the derivation of formula~\eqref{19} is useful to notice the following obvious identities
\begin{equation}
  \int d u^{i,s} \frac{\partial }{\partial u^{i,s}} = 1 - \pi_{u^{i,s}} ,
\qquad 
\frac{\partial }{\partial \theta_i^{s+1}} \theta_i^{s+1} = \pi_{\theta_i^{s+1}} .
\end{equation}

%\note{In the derivation of the anticommutator above, it is useful to notice the following identities:
%\begin{equation}
%[\frac{\partial }{\partial u^{i,s}} ,\int d u^{j,t} ] = \delta_{i,j} \delta_{s,t} \pi_{u^{i,s}} ,
%\end{equation}
%\begin{equation}
%\frac{\partial }{\partial u^{i,s}} \int d u^{i,s} = 1, \qquad
% \int d u^{i,s} \frac{\partial }{\partial u^{i,s}} = 1 - \pi_{u^{i,s}} ,
%\end{equation}
%\begin{equation}
%\frac{\partial }{\partial \theta_i^{s+1}} \theta_i^{s+1} = \pi_{\theta_i^{s+1}} .
%\end{equation}
%}
%
%\note{
%Note that we have an alternative expression for $p_{i,s}$
%\begin{align}
%p_{i,s} :=  &\pi_{u^{i,s}} \pi_{\theta_i^{s+1}} + \Big(
%1- \pi_{u^{i,s}} \pi_{\theta_i^{s+1}} +  \notag \\
%& - \sum_{\substack{ t\geq1\\ j} } \frac{f^j}{f^i} \frac{\partial }{\partial \theta_i^{s+1}} \theta_j^{t+1} \int d u^{i,s} \frac{\partial }{\partial u^{j,t}}  \Big) \pi_{\lambda -u^i} .
%\end{align}
%}

%
%\note{A second alternative version for $p_{i,s}$:
%\begin{align}
%p_{i,s} :=  &\pi_{u^{i,s}} \pi_{\theta_i^{s+1}} + \Big(
%\pi_{u^{i,s}}- \pi_{u^{i,s}} \pi_{\theta_i^{s+1}} +  \notag \\
%& + \sum_{\substack{ t\geq1\\ j} } \frac{f^j}{f^i} \theta_j^{t+1}  \frac{\partial }{\partial \theta_i^{s+1}} \int d u^{i,s} \frac{\partial }{\partial u^{j,t}}  \Big) \pi_{\lambda -u^i} .
%\end{align}
%}

The operator $p_{i,s}$ is homogeneous of standard degree zero, hence it acts separately on each $\hcA_d[\lambda]$, where the infinite sum appearing in its definition becomes finite, hence it is well defined. Moreover for $s$ large enough, it acts like the identity. We therefore introduce a well-defined operator $p_I$ on $\hcA[\lambda]$, which is given by the application of all the operators $p_{i,s}$ in a given order, i.e.,
\begin{equation}
p_I = \cdots \circ (p_{n,2} \circ \cdots \circ p_{1,2}) \circ (p_{n,1} \circ \cdots \circ p_{1,1} ).
\end{equation}										
It is easy to check that the operator $p_I$ maps $\hcA[\lambda]$ to $\hcA^{nt} \oplus\hcC[\lambda]$, where by $\hcA^{nt}$ we denote the subspace of $\hcA$ spanned by monomials with non-trivial dependence on the variables $u^{i,s}$, $\theta_i^{s+1}$ with $i=1,\dots ,n$, $s\geq1$.

By formula~\eqref{19} when applied to a $\Delta_{-1}$-cocycle the operator $p_I$ produces an equivalent cocycle $\tilde{g} + g$ with $\tilde{g}\in \hcA^{nt}$ and $g \in  \hcC[\lambda]$. 
%It follows that the operator $p_I$ kills the dependence on all the variables $u^{i,s}$, $\theta_i^{s+1}$ with $i=1,\dots ,n$, $s\geq1$, in the $\lambda$-dependent part of any cocyle.
%In other words a cohomology class  can always be represented by a cocycle of the form $g + \tilde{g}$, where $g \in \hcC[\lambda]$ and $\tilde{g}\in\hat{\mathcal{A}}$, and we require each monomial in $\tilde{g}$ to have a nontrivial dependence on the variables $\theta_i^{s+1}$ for $s\geq1$.
Notice that $g$ is a cocycle by itself, and can no longer be simplified by quotienting by $\im \Delta_{-1}$.

Let us now introduce a second homotopy operator. Let us denote $\Delta_{-1} = d'' - \lambda d'$ and define, for $s,t\geq1$ and $i\not= j$
\begin{equation}
h_{i,s;j,t} = \frac1{u^i-u^j} \frac1{f^i f^j} \frac{\partial}{\partial \theta_i^{s+1}} \frac{\partial }{\partial \theta_j^{t+1}}  \int du^{i,s} \int du^{j,t} .
\end{equation}
We have
\begin{equation}
[ h_{i,s;j,t}, d'' d' ] = 
%(1- \pi_{u^{i,s}} \pi_{\theta_i^{s+1}})(1-\pi_{u^{j,t}} \pi_{\theta_j^{t+1}}) 
1- p_{i,s;j,t}
+ (...) d' + (...)d'',
\end{equation}
where we did not specify the last two terms since they vanish when applied on elements in $\ker d' \cap \ker d''$, and
\begin{equation}
p_{i,s;j,t} :=  \pi_{u^{i,s}} \pi_{\theta_i^{s+1}} + \pi_{u^{j,t}} \pi_{\theta_j^{t+1}}-  \pi_{u^{i,s}} \pi_{\theta_i^{s+1}}\pi_{u^{j,t}} \pi_{\theta_j^{t+1}}.
\end{equation}
Clearly $p_{i,s;j,t}$ is a well-defined operator on $\hcA[\lambda]$, which acts like the identity for large $s$ or $t$. As before we define an operator $p_{II}$ on $\hcA[\lambda]$ given by the application of the such operators in a given order. The operator $p_{II}$ acts like the identity on $\hcC[\lambda]$ and maps $\hcA^{nt}$ to $\oplus_{i} \hcC_i^{nt}$, where $\hcC_i^{nt} = \hcC_i \cap \hcA^{nt}$, since it obviously removes all monomials that contain any quadratic term of the form $u^{i,s} u^{j,t}$, $u^{i,s} \theta_j^{t+1}$ or $\theta_i^{s+1} \theta_j^{t+1}$ for $i\not= j$, $s,t\geq1$. We denote by $\hcM$ the subspace of $\hcA$ spanned by such monomials. Recall that $\Delta_{-1} =\sum_i (-\lambda+u^i) f^i \hat{d}_i$ and observe that $\hat{d}_i$ sends to zero $\hcC[\lambda]$ and $\hcC_j^{nt}[\lambda]$ for $j \not=i$. Moreover $\hat{d}_i (\hcM) \subseteq \hcM$. Notice that this in particular implies that each summand in 
\begin{equation*}
\hcA[\lambda] = \hcC[\lambda] \oplus  \left( \oplus_i \hcC_i^{nt}[\lambda] ) 
\right) \oplus \hcM[\lambda],
\end{equation*}
is left invariant by $\Delta_{-1}$. 

Let us then consider the cocycle $\tilde{g}\in\hcA^{nt}$. Since $\tilde{g}$ does not depend on $\lambda$, it is in the kernel of $\Delta_{-1}$ iff $d' \tilde{g} =0$ and $d'' \tilde{g} =0$.
%\begin{equation}
%d' = \sum f^i \theta_i^{s+1} \frac{\partial }{\partial u^{i,s}},
%\quad
%d'' = \sum u^i f^i \theta_i^{s+1} \frac{\partial }{\partial u^{i,s}} .
%\end{equation}
Moreover, an element of the form $d'' d' f$ for $f\in\hat{\mathcal{A}}$ is in the image of $\Delta_{-1}$ and does not depend on $\lambda$, since 
$\Delta_{-1} d'f = d'' d' f $.
By the homotopy formula above it follows that $p_{II}$ sends the cocycle $\tilde{g}$ to an equivalent cocycle $\sum_i \tilde{g}_i$ for $\tilde{g}_i \in \hcC_i^{nt}$.

Moreover, since $\hat{d}_i( \hcC_i^{nt} ) \subseteq \hcC_i^{nt}$, and $\Delta_{-1} =\sum_i (-\lambda+u^i) f^i \hat{d}_i$, we have that $\tilde{g}_i \in \ker \hat{d}_i$, which in turn, because of the vanishing of the cohomology $H(\hcC_i, \hat{d}_i )$, is equivalent to  $\tilde{g}_i \in \im \hat{d}_i \subseteq \hcC_i^{nt}$. 

%It follows that, up to elements in $\im d'' d'$, a cocycle $\tilde{g}$ is equivalent to the cocycle $p_{i,s;j,t} (\tilde{g})$.
%%\begin{equation}
%%(\pi_{u^{i,s}} \pi_{\theta_i^{s+1}} + \pi_{u^{j,t}} \pi_{\theta_j^{t+1}} - \pi_{u^{i,s}} \pi_{\theta_i^{s+1}}\pi_{u^{j,t}} \pi_{\theta_j^{t+1}}) \tilde{g} .
%%\end{equation}
%This amounts to removing all monomials that contain any quadratic term of the form $u^{i,s} u^{j,t}$, $u^{i,s} \theta_j^{t+1}$ or $\theta_i^{s+1} \theta_j^{t+1}$ for $i\not= j$, $s,t\geq1$.
%Hence $\tilde{g}$ can be written as sum of $g_i \in \hcC_i$ for $i=1,\dots ,n$, where  each monomial in $g_i$ has a nontrivial dependence on the variables $\theta_i^{s+1}$ for $s\geq1$. 
%

%Moreover, since $\Delta_{-1} =\sum_i (\lambda-u^i) f^i \hat{d}_i$ and $\hat{d}_i$ acts nontrivially only on $\hcC_i$, we have that $g_i \in \ker \hat{d}_i$. Because of the vanishing of the cohomology $H(\hcC_i, \hat{d}_i )$ we can simply require that $g_i \in \im \hat{d}_i$, since that will automatically take care of the nontrivial dependence on $\theta_i^{s+1}$.

To complete the proof we have to show that we cannot further quotient by elements in $\im \Delta_{-1}$ without spoiling the form of $\sum_i\tilde{g}_i$. It is sufficient to show that if we have a $\Delta_{-1}$-coboundary of the form $\sum_i g_i$ with $g_i \in \hcC_i^{nt}$, then the $g_i$ vanish. Let, then, $h(\lambda) \in \hcA[\lambda]$ such that $\Delta_{-1} h(\lambda) = \sum_i g_i$ with $g_i \in \hcC_i^{nt}$. Since $\Delta_{-1}$ leaves invariant each summand in the splitting of $\hcA[\lambda]$ above, we can choose $h(\lambda) = \sum_i h_i(\lambda)$ for $h_i(\lambda) \in \hcC_i^{nt}[\lambda]$, where the $h_i(\lambda)$ have to satisfy $f_i (-\lambda + u^i) \hat{d}_i h_i(\lambda) = g_i$. Since $g_i$ does not depend on $\lambda$ this clearly implies that it has to be zero. We can conclude that the we cannot further simplify the cocycle $\sum_i \tilde{g}_i$.

%In principle the cocycle $g_i \in \im \hat{d}_i$ could be quotiented by an element of the form $\Delta_{-1} f(\lambda) \in \hcC_i$ for $f(\lambda) \in \hcA[\lambda]$. It is easy to see that $f \in \hcC_i[\lambda]$, otherwise the image $\Delta_{-1} f$ would also depend on some variables $u^{j,s}$, $\theta_j^{s+1}$ for $j\not= i$. We have then
%\begin{equation}
%\Delta_{-1} f = (u^i-\lambda) f^i \sum_{s\geq1} \theta_i^{s+1} \frac{\partial f}{\partial u^{i,s}} . 
%\end{equation}
%But such element can be independent of $\lambda$ iff it vanishes. 

Proposition~\ref{Hd0} is proved.

\end{proof}

We now continue to determine the differential on the first page of the spectral sequence. Define the space 
\begin{equation}
\label{decB}
\hcB:=\hcC[\lambda] \oplus \bigoplus_{i=1}^n \frac{\im  \left( \hat{d}_i : \hcC_i \to \hcC_i \right)[\lambda]}{(\lambda-u^{i})},
\end{equation}
where, in general, by $\frac{A[\lambda]}{(\lambda-u^i)}$ we denote the quotient of the space of polynomials $A[\lambda]$ by the space of polynomials that vanish at $\lambda = u^i$.

The inclusion $i:\hcB\hookrightarrow\hcA[\lambda]$ is defined as the identity on the first component $\hcC[\lambda]$, and, on $\hat{d}_i(\hcC_{i})[\lambda]/(\lambda-u^i)$, as the standard inclusion after the evaluation at $\lambda=u^{i}$. 
Proposition~\ref{Hd0} says that this inclusion $i:(\hcB,0)\hookrightarrow (\hcA[\lambda],\Delta_{-1})$ induces an isomorphism on cohomology.  The differential on the first page of the spectral sequence is just the 
differential induced on this cohomology by the next degree term $\Delta_{0}$ of $D_{\lambda}$.  More explicitly, we use $i$
 to embed elements $b\in\hcB$ in $\hcA[\lambda]$ as cocycles for $\Delta_{-1}$, and apply $\Delta_{0}$. Because $D_{\lambda}^{2}=0$, we have 
$\Delta_{-1}\Delta_{0}+\Delta_{0}\Delta_{-1}=0$, and the result  $\Delta_{0}i(b)$ is another cocycle for $\Delta_{-1}$, however not in the image of $i$. Therefore, to project down to the image of $i$, we use the 
homotopies described in the proof of Proposition~\ref{Hd0}. Let us denote by $p := p_{II} \circ p_I$ the map on $\hcA[\lambda]$ defined by the composition $p_I$ of all the maps $p_{i,s}$ followed by the composition $p_{II}$ of all the maps $p_{i,s;j,t}$, defined in the proof of Proposition~\ref{Hd0}. 
In particular the map $p$ associates with a $\Delta_{-1}$ cocycle in $\hcA[\lambda]$ the representative in $\hcB$ of the corresponding cohomology class. We obviously have $p\circ i={\rm id}$.  Then we have:
\begin{proposition}
The first page of the spectral sequence associated with the filtration $F \hcA[\lambda]$ is given by the complex
\[
(E_{1},d_{1})=(\hcB,\Delta_{0}'),
\]
with $\Delta_{0}'=p\circ\Delta_{0}\circ i$.
\end{proposition}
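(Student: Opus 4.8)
The plan is to unwind the general construction of the spectral sequence of the filtered complex $(\hcA[\lambda], D_\lambda)$ and to match each of its ingredients against objects already built above; the one nontrivial computation, that of the associated graded cohomology, has already been carried out in Proposition~\ref{Hd0}, so what is left is bookkeeping about how the pieces of $D_\lambda$ sit inside the filtration.

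First I would identify the associated graded differential $d_0$. Writing $D_\lambda = \Delta_{-1} + \Delta_0 + \Delta_1 + \cdots$ with $\deg_u \Delta_k = k$, and using that $D_\lambda$ is homogeneous of $\deg_\theta$-degree one so that each $\Delta_k$ raises $\deg_\theta$ by exactly one, each $\Delta_k$ raises the filtration degree $\deg_u + \deg_\theta$ by precisely $k+1$. Hence $\Delta_{-1}$ is the only part of $D_\lambda$ that preserves the filtration, and the differential induced on $E_0 \cong \hcA[\lambda]$ is $d_0 = \Delta_{-1}$, as already recorded. Therefore $E_1 = H(\hcA[\lambda], \Delta_{-1})$, and Proposition~\ref{Hd0}, via the inclusion $i$ and its left inverse $p$ assembled from the homotopies of that proof (so that $p \circ i = \mathrm{id}$), identifies this page as a graded space with $\hcB$.

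Next I would compute the induced differential $d_1$. By the general theory, $d_1$ is induced by the part of $D_\lambda$ raising the filtration degree by one, namely $\Delta_0$: a class in $E_1$ is represented by a $\Delta_{-1}$-cocycle $i(b)$ with $b \in \hcB$, one applies $\Delta_0$, and one takes the resulting $\Delta_{-1}$-cohomology class. That $\Delta_0 i(b)$ is again a $\Delta_{-1}$-cocycle follows by extracting the filtration-degree-one component of $D_\lambda^2 = 0$, which gives $\Delta_{-1}\Delta_0 + \Delta_0 \Delta_{-1} = 0$, whence $\Delta_{-1}\Delta_0 i(b) = -\Delta_0 \Delta_{-1} i(b) = 0$. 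Applying $p$, which sends any $\Delta_{-1}$-cocycle to the representative in $\hcB$ of its cohomology class, returns $p \Delta_0 i(b) = \Delta_0'(b)$, so $d_1 = \Delta_0' = p \circ \Delta_0 \circ i$; that $(\Delta_0')^2 = 0$ is then automatic, since $d_1$ is a differential on $E_1$.

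The one step that genuinely needs care — and hence the main obstacle — is to be sure that the concrete operator $p$ really computes the abstract ``take the $\Delta_{-1}$-cohomology class'' step, so that $p\Delta_0 i(b)$ is the correct representative. This is exactly what the homotopy identities $h\Delta_{-1} + \Delta_{-1} h = 1 - p_\bullet$ of Proposition~\ref{Hd0} supply: applied to the cocycle $\Delta_0 i(b)$ they show that $p(\Delta_0 i(b))$ differs from $\Delta_0 i(b)$ by a $\Delta_{-1}$-coboundary and lies in $\hcB$, which is precisely what the identification of $d_1$ requires.
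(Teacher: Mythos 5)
Your proposal is correct and follows essentially the same route as the paper: identify $d_0=\Delta_{-1}$ from the filtration bookkeeping, use Proposition~\ref{Hd0} with the maps $i$ and $p$ to identify $E_1\cong\hcB$, extract $\Delta_{-1}\Delta_0+\Delta_0\Delta_{-1}=0$ from $D_\lambda^2=0$ to see that $\Delta_0 i(b)$ is again a $\Delta_{-1}$-cocycle, and project back with $p$ to get $d_1=p\circ\Delta_0\circ i$. One small imprecision: not all the homotopies have the shape $h\Delta_{-1}+\Delta_{-1}h=1-p_\bullet$ --- the identities for $p_{i,s;j,t}$ are homotopies for $d''d'$ up to terms vanishing on $\ker d'\cap\ker d''$, with $d''d'f=\Delta_{-1}(d'f)$ supplying the coboundary --- but since the needed property of $p$ (sending a $\Delta_{-1}$-cocycle to the cohomologous representative in $\hcB$) is already established in the proof of Proposition~\ref{Hd0}, which you cite, this does not affect the argument.
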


From the proof of Proposition~\ref{Hd0} we can derive some properties of the map $p$, which we collect in the following Corollary. 
%{It is important to define $p$ on the whole space $\hcA[\lambda]$ rather than just on cocyles, see the proof of Lemma~\ref{diff}.}
Let us write the space $\hcA[\lambda]$ as the direct sum of three  subspaces, i.e.,
\def\hcM{\hat{\mathcal{M}}}
\begin{equation}
\hcA[\lambda] = \hcC[\lambda] \oplus \left( \oplus_i \hcC_i^{nt}[\lambda] \right) \oplus \hcM[\lambda],
\end{equation}
where, as already mentioned in the proof of Proposition~\ref{Hd0}, $\hcC_i^{nt}$ is the subspace of $\hcC_i$ generated by all nontrivial monomials, i.e., monomials that contain at least one generator among $u^{i,s}$, $\theta_i^{s+1}$ for $s\geq1$, and $\hcM$ is the subspace of $\hcA$ generated by all nontrivial mixed monomials, i.e., monomials containing at least a pair of generators with different indices $i$. 
We further decompose
\begin{equation}
\hcC_i^{nt}[\lambda] = \hcC_i^{nt} \oplus	(\lambda-u^i)\hcC_i^{nt}[\lambda] .
\end{equation}

\begin{corollary} \label{pcor}
The map $p$ acts as the identity on $\hcC[\lambda]$ and $\hcC_i^{nt} \cap \ker \hat{d}_i$, and it sends to zero the spaces $(\lambda - u^i) \hcC^{nt}_i[\lambda]$, $i=1,\dots ,n$, and $\hcM[\lambda]$. 
\end{corollary}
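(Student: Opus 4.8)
The plan is to verify the four assertions separately, reading off the behaviour of $p=p_{II}\circ p_I$ on each summand of the decomposition $\hcA[\lambda]=\hcC[\lambda]\oplus(\oplus_i\hcC_i^{nt}[\lambda])\oplus\hcM[\lambda]$ directly from the explicit formulas for $p_{i,s}$ and $p_{i,s;j,t}$, together with three facts already established in the proof of Proposition~\ref{Hd0}: that $p$ fixes every element of the image $i(\hcB)$, i.e. $p\circ i=\mathrm{id}$; that $p_I$ maps $\hcA[\lambda]$ into $\hcC[\lambda]\oplus\hcA^{nt}$; and that $p_{II}$ annihilates $\hcM$.

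For the two identity statements I would show that the relevant subspaces already lie in $i(\hcB)$, so that $p\circ i=\mathrm{id}$ applies directly. The summand $\hcC[\lambda]$ is by construction the first component of $i(\hcB)$, giving $p|_{\hcC[\lambda]}=\mathrm{id}$ at once. For the second statement I would first note, using the Poincar\'e lemma $H(\hcC_i,\hat d_i)=\hcC$ together with $\im\hat d_i\subseteq\hcC_i^{nt}$, that every $\hat d_i$-cocycle in $\hcC_i^{nt}$ is exact, i.e. $\hcC_i^{nt}\cap\ker\hat d_i=\im\hat d_i$. On the other hand the $i$-th component of $i(\hcB)$ is precisely the space of $\lambda$-independent elements of $\im\hat d_i$, obtained by evaluating $\im\hat d_i[\lambda]$ at $\lambda=u^i$. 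Hence $\hcC_i^{nt}\cap\ker\hat d_i$ sits inside $i(\hcB)$ and is fixed by $p$.

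For the vanishing on $(\lambda-u^i)\hcC_i^{nt}[\lambda]$ the key observation is that such elements vanish at $\lambda=u^i$. Since the second summand of $p_{i,s}$ ends with the evaluation $\pi_{\lambda-u^i}$, it annihilates any multiple of $(\lambda-u^i)$; thus only the projection $\pi_{u^{i,s}}\pi_{\theta_i^{s+1}}$ survives, and as this commutes with multiplication by $(\lambda-u^i)$ it keeps us inside $(\lambda-u^i)\hcC_i[\lambda]$, so the same reduction applies at each step. A direct check shows moreover that $p_{k,s}$ acts as the identity on these elements for $k\neq i$, its second term vanishing because no $\theta_k^{s+1}$ is present. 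Composing all the $p_{i,s}$ therefore collapses $p_I$ to the product over $s\geq1$ of the projections $\pi_{u^{i,s}}\pi_{\theta_i^{s+1}}$, which sets every nontrivial index-$i$ variable to zero and so annihilates $\hcC_i^{nt}$; hence $p_I$, and a fortiori $p$, vanishes here.

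Finally, the vanishing on $\hcM[\lambda]$ I would reduce to the claim that $p_I$ maps $\hcM[\lambda]$ into itself, after which $p_{II}(\hcM[\lambda])=0$ finishes the argument. The projection part $\pi_{u^{i,s}}\pi_{\theta_i^{s+1}}$ of each $p_{i,s}$ either kills a mixed monomial or leaves it unchanged, hence cannot unmix it. The heart of the matter, and the step I expect to be the main obstacle, is the second term of $p_{i,s}$: I would trace its effect on the set of indices carrying nontrivial derivative variables in a monomial, observing that, schematically, it replaces a factor $u^{j,t}\theta_i^{s+1}$ by $\theta_j^{t+1}u^{i,s}$ and so leaves that index set unchanged, thereby sending $\hcM[\lambda]$ into $\hcM[\lambda]$. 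The delicate bookkeeping is to confirm that no differentiation can drop an index multiplicity to zero without it being restored: each removed variable ($u^{j,t}$, respectively $\theta_i^{s+1}$) is immediately replaced by one of the same index ($\theta_j^{t+1}$, respectively $u^{i,s}$), so a monomial with at least two indices stays mixed.
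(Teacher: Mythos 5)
Your proposal is correct, and for the two vanishing statements it coincides with the paper's own proof: on $(\lambda-u^i)\hcC_i^{nt}[\lambda]$ the evaluation $\pi_{\lambda-u^i}$ kills the second summand of $p_{i,s}$, so that $p_{i,s}$ collapses to $\pi_{u^{i,s}}\pi_{\theta_i^{s+1}}$ and $p_I$ annihilates $\hcC_i^{nt}$, while $p_{k,s}$ with $k\not=i$ acts as the identity; and on $\hcM[\lambda]$ one checks $p_I(\hcM[\lambda])\subseteq\hcM[\lambda]$ --- your bookkeeping that the second summand of $p_{i,s}$ trades $u^{j,t}\theta_i^{s+1}$ for $\theta_j^{t+1}u^{i,s}$ and hence preserves the set of indices carrying nontrivial variables is exactly the right point --- and then uses $p_{II}(\hcM[\lambda])=0$. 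Where you genuinely deviate is in the two identity statements. The paper proves them by direct computation: every $p_{i,s}$ (and $p_{II}$) is the identity on $\hcC[\lambda]$, and on $\hcC_i^{nt}$ one has $p_{i,s}=1-\frac{\partial}{\partial\theta_i^{s+1}}\int du^{i,s}\,\hat{d}_i$ while $p_{j,s}=1$ for $j\not=i$, so $p_I$ (and then $p$) fixes $\hcC_i^{nt}\cap\ker\hat{d}_i$. You instead invoke $p\circ i=\mathrm{id}$ together with the identification $\hcC_i^{nt}\cap\ker\hat{d}_i=\im\hat{d}_i$; the identification is a correct consequence of the Poincar\'e lemma, and your description of the $i$-th component of $i(\hcB)$ as $\im\hat{d}_i$ after evaluation at $\lambda=u^i$ is also right. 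The caveat is that, modulo your identification, the two identity claims of the Corollary are literally equivalent to $p\circ i=\mathrm{id}$, which the paper asserts as ``obvious'' just before the Corollary; its honest justification is either the one-line direct computation above (the paper's route), or the combination of two facts from the proof of Proposition~\ref{Hd0}, namely that $p$ sends any $\Delta_{-1}$-cocycle to a cohomologous cocycle lying in $i(\hcB)$, and the concluding uniqueness argument that distinct elements of $i(\hcB)$ are never cohomologous. To avoid circularity you should make one of these two justifications explicit rather than citing $p\circ i=\mathrm{id}$ as an established black box; with that repair your argument is complete, and is in substance the same as the paper's except for this more abstract treatment of the identity statements.
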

\begin{proof}

The projection $p$ on $\hcA[\lambda]$ is defined as $p_{II} \circ p_{I}$, where $p_I$ is given by the composition of the operators $p_{i,s}$ for $s\geq1$ and $p_{II}$ by the composition of the operators $p_{i,s;j,t}$ for $i\not=j$, $s,t\geq1$.

It is quite easy to check that $p_I = 1$ on $\hcC[\lambda]$, since all $p_{i,s}$ act just like the identity. 

On $\hcC_i^{nt}[\lambda]$ the operator $p_{k,s}$ acts like the identity  unless $i=k$. On $(\lambda - u^i) \hcC^{nt}_i[\lambda]$ the operator $\pi_{\lambda-u^i}$ is equal to zero, so $p_{i,s} = \pi_{u^{i,s}} \pi_{\theta_i^{s+1}}$, therefore $p_I =0$ on $(\lambda-u^i) \hcC_i^{nt} [\lambda]$.

Notice that on $\hcC_i^{nt}$ the operator $p_{i,s}$ becomes
\begin{equation}
p_{i,s} = 1- \frac{\partial }{\partial \theta_i^{s+1}} \int du^{i,s} \hat{d}_i
\end{equation}
while $p_{j,s} = 1$ for $j\not=i$, therefore on $\hcC_i^{nt} \cap \ker \hat{d}_i$, the operator $p_I$ acts like the identity. 
Moreover notice that $p_I(\hcM[\lambda]) \subset \hcM[\lambda]$.

Finally it is easy to see that $p_{II}$ acts as the identity on $\hcC[\lambda]$ and $\hcC_i^{nt}[\lambda]$, and as the zero operator on $\hcM[\lambda]$. The Corollary is proved.
\end{proof}

\subsection{The second filtration}
We are now left with the problem of computing the second page $E_2$ of the spectral sequence associated with the filtration $F \hcA[\lambda]$, which amounts, as explained in the previous section, to computing the cohomology of the operator $\Delta_0'$ on the space $\hcB$. This task is not trivial, so we introduce a filtration on $\hcB$ and consider the associated spectral sequence. 

Let us define a degree $\deg_{\theta^1}$ on $\hcA$, and consequently on $\hcB$, that simply counts the number of $\theta_i^1$, $i=1,\dots ,n$. The differential $\Delta_0$ splits in three homogeneous components 
\begin{equation}
\Delta_0 = \Delta_{0,1} + \Delta_{0,0} + \Delta_{0,-1}
\end{equation}
where $\deg_{\theta^1} \Delta_{0,k} = k$. It is clear from their definition that the maps $i$ and $p$ preserve the degree $\deg_{\theta^1}$, hence the homogeneous components of $\Delta'_0$ are simply given by $\Delta'_{0,k} = p \circ \Delta_{0,k} \circ i$.

%\note{It is easy to see that the term $\Delta_{0,2}$ is zero by looking carefully at the formula for $\Delta_0$: a degree two term can only appear if we set $b=0$ and $a=1$ in the last three lines of such expression, but in such case we get $s=a+b=1$, hence a derivative in $\theta_i^{1}$ appears, decreasing the degree.
%}

Let us now introduce on $\hcB$ the decreasing filtration $F \hcB$ associated with the degree $\deg_{\theta^1} - \deg_\theta$, i.e., let $F^r \hcB$ be given by elements of $\hcB$ with homogeneous components in $\deg_{\theta^1} - \deg_\theta$ of degree less or equal $-r$,
\[
\ldots\subset F^2\hcB\subset F^1\hcB\subset F^0\hcB=\hcB.
\]

This filtration is preserved by $\Delta'_0$, hence this turns $(\hcB,\Delta_0')$ into a filtered complex to which we associate a spectral sequence $(E'_k, d'_k)$. The zeroth page is given by $\hcB$ with induced differential given by $\Delta'_{0,1}$, i.e.,
\begin{equation}
(E'_0, d'_0) = ( \hcB, \Delta'_{0,1}) . 
\end{equation}
Our aim is to compute the first page $E'_1$, i.e., the cohomology of $\Delta'_{0,1}$ on $\hcB$. Let us first compute the explicit expression of the differential.

\begin{lemma}
\label{diff}
The differential $\Delta'_{0,1}$ on $\hcB$ is given by 
\begin{equation}
\Delta'_{0,1} = p \circ \hat{\Delta}_{0,1} \circ i
\end{equation}
where
% ok checked, the correction in $\Delta_0,1$ in the proof does not affect this one 9/15
\begin{align}
\hat{\Delta}_{0,1} & = 
\quad(-\lambda+u^i)f^i\theta_i^{1}\frac{\d}{\d u^i} 
\\ \notag &
+\sum_{\substack{
s \geq 1
}}  \frac{s+2}{2}
f^i u^{i,s}\theta_i^1 
\frac{\d}{\d u^{i,s}}
\\ \notag & 
-\frac{1}{2}\sum_{\substack{
s \geq 1
}}
 (-\lambda+u^j) s 
f^j\frac{\d_j f^i}{f^i} u^{i,s}\theta_j^1
\frac{\d}{\d u^{i,s}}
\\ \notag
& - \frac{1}{2} 
(-\lambda+u^j)
\d_if^j  \theta_j^1 \theta_j^{0}
 \frac{\d}{\d \theta_i^0}
+ \frac{1}{2} \sum_{\substack{
s\geq 0
}}  
f^i  (s-1)\theta_i^1\theta_i^{s}
 \frac{\d}{\d \theta_i^s}
\\ \notag
&
-
\frac{1}{2} \sum_{\substack{
s\geq 0
}}  
(-\lambda+u^j)
f^j\frac{\d_j f^i}{f^i} ( s+1)\theta_j^1 \theta_i^s
 \frac{\d}{\d \theta_i^s}
\\ \notag &
+
\frac{1}{2} 
(-\lambda+u^j)
f^j\frac{\d_j f^i}{f^i}  \theta_i^{1} \theta_j^0
 \frac{\d}{\d \theta_i^0} .
\end{align}
\end{lemma}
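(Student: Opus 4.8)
The plan is to read the asserted identity as a statement purely about projections. By construction $\Delta'_{0,1}=p\circ\Delta_{0,1}\circ i$, where $\Delta_{0,1}$ is the component of $\Delta_0$ raising $\deg_{\theta^1}$ by one; so the Lemma is exactly the equality $p\circ\Delta_{0,1}\circ i=p\circ\hat{\Delta}_{0,1}\circ i$ of operators on $\hcB$. I would prove the sharper statement that $\Delta_{0,1}-\hat{\Delta}_{0,1}$ maps $i(\hcB)$ into $\ker p$. By Corollary~\ref{pcor} the subspaces $\hcM[\lambda]$ and $(\lambda-u^i)\hcC_i^{nt}[\lambda]$, $i=1,\dots,n$, all lie in $\ker p$, so it suffices to check that every term of $\Delta_{0,1}$ which does not already appear in $\hat{\Delta}_{0,1}$ sends $i(\hcB)$ into one of these two kinds of subspaces. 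Note this uses only the vanishing part of Corollary~\ref{pcor}, not the homotopy projection hidden in $p$.

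First I would record the shape of the arguments. As in the proof of Proposition~\ref{Hd0}, an element of $i(\hcB)$ is a sum of a term $c\in\hcC[\lambda]$, involving none of the generators $u^{i,s},\theta_i^{s+1}$ with $s\geq1$, and of $\lambda$-independent terms $e_i\in\im(\hat{d}_i)\subseteq\hcC_i^{nt}$. The decisive structural observation, which I would isolate at the outset, is that since $\hat{d}_i=\sum_{s\geq1}\theta_i^{s+1}\,\partial/\partial u^{i,s}$, every monomial of $e_i$ carries at least one odd higher generator $\theta_i^{s+1}$, $s\geq1$, of index $i$, and this factor is left untouched by any $u$-derivative. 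With this in hand I would classify the non-surviving terms of $\Delta_{0,1}$ by their death mechanism. (a) A term introducing a higher generator $u^{j,s}$ with $j\neq i$ (the genuinely crossed first subterms of lines $2$--$5$ of $\Delta_0$, carrying $\partial_jf^i$ or $f^i\partial_if^j/f^j$) produces on $e_i$ a monomial containing both $u^{j,s}$ and the surviving $\theta_i^{s+1}$, hence lands in $\hcM[\lambda]$, while on $c$ it vanishes because $\partial/\partial u^{i,s}$ kills $c$. (b) A term creating a new odd higher generator $\theta_j^{s+1}$, $s\geq1$, always carries the \emph{matching} factor $(-\lambda+u^j)$; on $i(\hcB)$ its image is either mixed (in $\hcM[\lambda]$) or lies in $\hcC_j^{nt}[\lambda]$, where $(-\lambda+u^j)=-(\lambda-u^j)$ forces it into $(\lambda-u^j)\hcC_j^{nt}[\lambda]$. (c) The diagonal remnants ($j=i$) of the crossed $u$-subterms keep the index-$i$ content but carry a factor $(-\lambda+u^i)$, hence land in $(\lambda-u^i)\hcC_i^{nt}[\lambda]$ on $e_i$ and are killed by $\partial/\partial u^{i,s}$ on $c$.

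The terms surviving all of (a)--(c) are precisely those that create no new higher generator: a $\theta^1$ attached to a $u$-derivative acting diagonally (the $\theta_i^1$ and $\theta_j^1$ $u$-terms), a $\theta^1$ attached to a $\theta$-derivative that reinserts the same $\theta_i^s$ (the diagonal $\theta_i^1\theta_i^s$ and the $\theta_j^1\theta_i^s$ terms), and the two $\partial/\partial\theta_i^0$ terms producing $\theta_j^1\theta_j^0$ and $\theta_i^1\theta_j^0$ at $s=0$. These are exactly the seven families of $\hat{\Delta}_{0,1}$, and it does no harm that each is kept verbatim (with its full $(-\lambda+u^j)$ coefficient and the summation over $j$), since the extra pieces that $p$ also annihilates merely re-enter the discarded list. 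It then remains to fix the numerical coefficients, which is pure bookkeeping: for each surviving monomial type one sums the binomial weights $\binom{s}{b}$ over the admissible splittings $s=a+b$ with the signs dictated by lines $2$--$9$ of $\Delta_0$. For instance the coefficient of $f^iu^{i,s}\theta_i^1\,\partial/\partial u^{i,s}$ collects $\binom s0$ from line $2$ and $\pm\tfrac12\binom s1$ from lines $3,4,5$, giving $1+\tfrac s2+\tfrac s2-\tfrac s2=\tfrac{s+2}2$; the two ways a pair $\theta_i^a\theta_i^{1+b}$ can reduce to $\theta_i^1\theta_i^s$, together with the sign from reordering the odd variables, produce the factors $(s-1)$ and $(s+1)$ in the $\theta$-derivative terms.

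The main obstacle is not conceptual but organizational: one must extract $\Delta_{0,1}$ correctly from the long expression for $\Delta_0$, track at each step which index carries a higher generator, and keep the two death mechanisms (mixed monomials in $\hcM[\lambda]$ versus the factor $(\lambda-u^j)$ over $\hcC_j^{nt}$) cleanly separated; the constraint $\deg_{\theta^1}=+1$ is also doing silent work, as it excludes the a priori worrisome terms that would create an index-$i$ generator $\theta_i^{\geq2}$ against a non-matching factor $(-\lambda+u^j)$. The one genuinely delicate point is the structural fact that an element of $\im(\hat{d}_i)$ always retains an index-$i$ odd higher generator after a $u^{i,s}$-derivative, which is exactly what forces all the crossed $u$-derivative terms into $\hcM[\lambda]$; once this is established the remainder is a finite and routine computation.
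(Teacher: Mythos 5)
Your proposal is correct and takes essentially the same route as the paper: both extract the $\deg_{\theta^1}$-raising component $\Delta_{0,1}$ from $\Delta_0$ and then use the vanishing part of Corollary~\ref{pcor} to discard, term by term, exactly the pieces that land in $\hcM[\lambda]$ or carry the matching factor $(\lambda-u^j)$ over $\hcC_j^{nt}[\lambda]$ (the paper's second, fourth, sixth terms and the $s\geq 2$ pieces of the $\theta$-derivative terms), with your coefficient bookkeeping ($\tfrac{s+2}{2}$, $s-1$, $s+1$) agreeing with the paper's. The only cosmetic difference is that the paper additionally records the relation $\Delta_{0,1}\Delta_{-1}+\Delta_{-1}\Delta_{0,1}=0$, which your ``difference maps into $\ker p$'' framing makes unnecessary for the Lemma as stated.
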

\begin{proof}
Let us first collect the terms in $\Delta_0$ that increase the degree $\deg_{\theta^1}$. We get:
% there was an extra s=1 term in the last two lines of the following, that I have corrected, 28/8
\begin{align}
\Delta_{0,1} & = 
(-\lambda+u^i)f^i\theta_i^{1}\frac{\d}{\d u^i} 
\\ \notag &%%%%%%% 2
+
\sum_{\substack{
s \geq 1
}} (-\lambda+u^i) \partial_j f^i u^{j,s} \theta_i^{1} \frac{\d}{\d u^{i,s}}
+ \sum_{\substack{
s\geq 1
}} f^i u^{i,s} \theta_i^{1} \frac{\d}{\d u^{i,s}}
\\ \notag %%%%%%%%% 3
& +\frac{1}{2}\sum_{\substack{
s \geq 1
}}  (-\lambda+u^i) s
\d_jf^i u^{j,s} \theta_i^1
\frac{\d}{\d u^{i,s}}
+\frac{1}{2}\sum_{\substack{
s \geq 1
}}  s
f^i u^{i,s} \theta_i^1
\frac{\d}{\d u^{i,s}}
\\ \notag %%%%%%%%%%%% 4
& +\frac{1}{2}\sum_{\substack{
s \geq 1
}}  (-\lambda+u^i) s 
f^i\frac{\d_i f^j}{f^j} u^{j,s}\theta_j^1 
\frac{\d}{\d u^{i,s}}
\\ \notag %%%%%%%%%%5
& -\frac{1}{2}\sum_{\substack{
s \geq 1
}}
 (-\lambda+u^j) s 
f^j\frac{\d_j f^i}{f^i} u^{i,s}\theta_j^1
\frac{\d}{\d u^{i,s}}
\\ \notag
& + \frac{1}{2} \sum_{\substack{
s\geq 0
}}  
(-\lambda+u^j)
\d_if^j  (\theta_j^s \theta_j^{1} + s\theta_j^1 \theta_j^{s})
 \frac{\d}{\d \theta_i^s}
+ \frac{1}{2} \sum_{\substack{
s\geq 0
}}  
f^i  (\theta_i^s\theta_i^{1} +s\theta_i^1\theta_i^{s})
 \frac{\d}{\d \theta_i^s}
\\ \notag
%&
%+ \frac{1}{2} \sum_{\substack{
%s\geq 0
%}}  
%(-\lambda+u^j)
%f^j\frac{\d_j f^i}{f^i} (\theta_i^s \theta_j^{1}+s\theta_i^1 \theta_j^{s})   \frac{\d}{\d \theta_i^s}
%\\ \notag
%&
%- 
%\frac{1}{2} \sum_{\substack{
%s\geq 0
%}}  
%(-\lambda+u^j)
%f^j\frac{\d_j f^i}{f^i} (\theta_j^s \theta_i^{1} + s\theta_j^1 \theta_i^s)
% \frac{\d}{\d \theta_i^s} \\
%corrected version:
& +\frac12  
\sum_{\substack{
s\geq 0 \\ s\not=1
}}  
(-\lambda+u^j)
f^j\frac{\d_j f^i}{f^i} 
(s+1) (\theta_i^s \theta_j^1 - \theta_j^s \theta_i^1 ) 
\frac{\d}{\d \theta_i^s} \\
& +(-\lambda+u^j)
f^j\frac{\d_j f^i}{f^i}  \theta_i^1 \theta_j^1 
\frac{\d}{\d \theta_i^1} .
\end{align}
Since $\Delta_{-1}$ has $\deg_{\theta^1}$ equal to zero, the equation 
$\Delta_0 \Delta_{-1} + \Delta_{-1} \Delta_0 =0$ implies in particular that
\begin{equation}
\Delta_{0,1} \Delta_{-1} + \Delta_{-1} \Delta_{0,1} =0 .
\end{equation}
Since the map $i$ maps to the kernel of $\Delta_{-1}$, it follows from the previous equation that $\Delta_{0,1} \circ i$ also maps to the kernel of $\Delta_{-1}$.

%The second 
%equation shows that since $i$ maps to the kernel of $\Delta_{-1}$, the projection $p$ acting on 
%$\hcC_i[\lambda]$ is always given by setting $\lambda=u_i$ in the computation of 
%$\Delta_0':=p\Delta_0 i$.

%\note{The point here is that while $\Delta_{0,1} \circ i$ is in the kernel off $\Delta_{-1}$, the same cannot be said for each line in the expression above. That is why Corollary 14 has to be given for a general element in $\hcA[\lambda]$ rather than just for cocycles.}

Keeping in mind the properties of the map $p$ outlined in Corollary~\ref{pcor}, let us now determine which of the terms in the formula for 
$\Delta_{0,1}$ above, applied to an element in the image of $i$, are mapped to zero when composing with $p$. 

The second, fourth and sixth terms are of the form
\begin{equation}
\sum_{s\geq1} C_s^{ij} (\lambda - u^i) u^{j,s} \frac{\partial }{\partial u^{i,s}} 
\end{equation}
for $C_s^{ij} \in \hcC$. When evaluated on the image 
of \eqref{decB} through $i$, such operator clearly gives $0$ on $\hcC[\lambda]$. An element
%\note{\footnote{\note{The operator in the displayed formula when acting on $\hat{d}_k(\hcC_k)$ is clearly non-zero only if $i=k$. Moreover, recall that any monomial in $\hat{d}_k(\hcC_k)$ is non-trivial in $\theta_i^{s+1}$, from some $s\geq1$. }}}
 of $\hat{d}_i (\hcC_i)$ is mapped, for $i\not=j$, to $\hcM[\lambda]$, which in turn is mapped to zero by $p$. 
Finally, for $i=j$, such element is mapped again to an element in $\hcC_i$, which however vanishes for $\lambda=u^i$, hence is mapped to zero by $p$. Hence these terms do not contribute to $\Delta'_{0,1}$. 

The eighth term vanishes identically for $s=1$ and for $s\geq 2$ either maps $\hcC_i$ to $\hcM[\lambda]$, in which case applying $p$ gives zero, or maps $\hcC_i$ 
to $\hcC_j$ so that setting $\lambda=u_j$ yields zero. Therefore only the $s=0$ term contributes.

In the tenth term, observe that the operator $(-\lambda+u^j)\theta_j^s$, $s\geq2$ acting on 
$\hcC_j$ gives zero when composing with $p$. Collecting all the remaining terms, we obtain the expression stated in the Lemma.
\end{proof}
% checked again 3/8/16

\subsection{The third filtration}

We have arrived at the problem of computing the cohomology of the complex  $(\hcB,\Delta'_{0,1})$.
In this Section first we show that we can compute such cohomology on each of the $n+1$ summands in~\eqref{decB} independently, and then 
we introduce our final filtration to estimate the possible non-vanishing $(p,d)$-degrees in the cohomology of $\Delta_{0,1}'$.

Let us start with the following observation:
\begin{lemma}
The operator $\Delta'_{0,1}$ preserves the splitting \eqref{decB} of the 
cohomology $\hcB$ of $\Delta_{-1}$ in $n+1$ summands, namely
\begin{align*}
 \Delta'_{0,1} \left(\hat{\mathcal{C}}[\lambda]\right)& \subset \hat{\mathcal{C}}[\lambda], \\ 
 \Delta_{0,1}' \left(\frac{\im  \left( \hat{d}_i : \hcC_i \to \hcC_i \right)[\lambda]}{(u^i-\lambda)}\right) &
 \subset \frac{\im  \left( \hat{d}_i : \hcC_i \to \hcC_i \right)[\lambda]}{(u^i-\lambda)},
\end{align*}
for $i=1,\dots ,n$.
\end{lemma}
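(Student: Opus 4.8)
The plan is to verify directly that each of the two differential operators appearing in the formula for $\Delta'_{0,1}=p\circ\hat\Delta_{0,1}\circ i$ respects the decomposition \eqref{decB}, by tracking the index structure of $\hat\Delta_{0,1}$ together with the vanishing behaviour of $p$ recorded in Corollary~\ref{pcor}. Since $\Delta'_{0,1}$ is built out of $\hat\Delta_{0,1}$ sandwiched between $i$ and $p$, and since both $i$ and $p$ are defined summand-by-summand on the splitting, the whole question reduces to: given $b$ lying in one summand, to which summands can $\hat\Delta_{0,1}\,i(b)$ contribute \emph{after} applying $p$?

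First I would treat the summand $\hcC[\lambda]$. Here $i$ is the identity and the embedded element depends on none of the variables $u^{i,s}$, $\theta_i^{s+1}$ with $s\geq1$. Inspecting the six groups of terms in $\hat\Delta_{0,1}$, the first, second and third carry a factor $u^{i,s}$ with $s\geq1$ (or $\d/\d u^{i,s}$ acting on something without such a variable gives zero), so on $\hcC[\lambda]$ only the terms acting purely on $\theta$-variables and on the base functions survive; these manifestly produce again an element of $\hcC[\lambda]$, on which $p$ acts as the identity by Corollary~\ref{pcor}. So the first inclusion is immediate once one checks that no term creates a variable $u^{i,s}$ or $\theta_i^{s+1}$ with $s\geq 1$ out of $\hcC[\lambda]$.

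Next I would handle the summand indexed by a fixed $i$, i.e. $\im(\hat d_i)[\lambda]/(\lambda-u^i)$, whose elements are embedded by $i$ via evaluation at $\lambda=u^i$ into $\hcC_i^{nt}\cap\ker\hat d_i$. The key point is to show that any contribution of $\hat\Delta_{0,1}$ that would land in another summand $\hcC_j^{nt}$ (for $j\neq i$), in $\hcM[\lambda]$, or in $(\lambda-u^j)\hcC_j^{nt}[\lambda]$, is annihilated by $p$. This is exactly the mechanism already exploited in the proof of Lemma~\ref{diff}: terms that mix indices $i\neq j$ either create a genuinely mixed monomial (killed by $p_{II}$, hence by $p$, since $p(\hcM[\lambda])=0$) or produce an element of a single-index space $\hcC_j^{nt}$ carrying a factor $(-\lambda+u^j)$, which vanishes under the evaluation $\pi_{\lambda-u^j}$ built into $p$. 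After discarding those, what remains acts within $\hcC_i$ and, being a restriction of the differential, stays in $\im\hat d_i$ modulo the ideal $(\lambda-u^i)$; I would record that $p$ then lands it back in the $i$-th summand. The main obstacle, and the step deserving genuine care rather than bookkeeping, is the term $(-\lambda+u^i)f^i\theta_i^1\,\d/\d u^i$: it differentiates the \emph{coefficient functions} in $C^\infty(U)$, which may depend on all $u^j$, so one must confirm that its action on a representative in $\hcC_i^{nt}$ does not generate cross-index monomials and that the resulting polynomial dependence on $\lambda$ is correctly controlled modulo $(\lambda-u^i)$. Once this term is shown to preserve the $i$-th summand, the lemma follows by collecting the surviving contributions.
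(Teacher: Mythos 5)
Your handling of the first summand $\hcC[\lambda]$ is correct and is what the paper does (direct inspection of $\hat\Delta_{0,1}$ plus the fact, from Corollary~\ref{pcor}, that $p$ is the identity there). The gap is in the second half. After discarding the contributions killed by $p$ (mixed monomials in $\hcM[\lambda]$, and single-index terms carrying a factor $(\lambda-u^j)$), you are left with an element of $\hcC_i^{nt}[\lambda]$, and you then assert that it ``stays in $\im \hat{d}_i$ modulo the ideal $(\lambda-u^i)$'' because it is ``a restriction of the differential.'' That is precisely the point requiring proof, and no amount of index bookkeeping delivers it: the $i$-th summand of \eqref{decB} is not $\hcC_i^{nt}[\lambda]/(\lambda-u^i)$ but the strictly smaller space $\hat{d}_i(\hcC_i)[\lambda]/(\lambda-u^i)$, with $\hat{d}_i(\hcC_i)=\hcC_i^{nt}\cap\ker\hat{d}_i$ by the Poincar\'e lemma, so you must show the output is $\hat{d}_i$-\emph{closed}. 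Worse, Corollary~\ref{pcor} only describes $p$ on $\hcC_i^{nt}\cap\ker\hat{d}_i$; on the rest of $\hcC_i^{nt}$ the operator $p_I$ involves the homotopy corrections $1-\frac{\partial}{\partial\theta_i^{s+1}}\int du^{i,s}\,\hat{d}_i$, so without closedness you do not even control where $p$ sends your surviving terms. The paper closes exactly this gap by an algebraic argument you never invoke: $D_\lambda^2=0$ gives $\Delta_{0,1}\Delta_{-1}+\Delta_{-1}\Delta_{0,1}=0$; since $\Delta_{-1}=f^i(-\lambda+u^i)\hat{d}_i$ on $\hcC_i^{nt}[\lambda]$ and multiplication by $f^i(-\lambda+u^i)$ is injective on polynomials in $\lambda$, applying the anticommutator to $i(b)\in\ker\hat{d}_i$ shows the image is again $\hat{d}_i$-closed, hence lies in $\hat{d}_i(\hcC_i)[\lambda]$, where $p$ (evaluation at $\lambda=u^i$ followed by the identity) lands it back in the $i$-th summand.

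A secondary remark: the term you single out as ``the main obstacle,'' namely $(-\lambda+u^i)f^i\theta_i^1\,\partial/\partial u^i$, is in fact harmless. Differentiating the coefficient functions in $C^\infty(U)$ never creates jet variables $u^{j,s}$ or $\theta_j^{s+1}$, so no cross-index monomials arise; and since $i(b)$ is $\lambda$-independent, this term produces an element of $(\lambda-u^i)\hcC_i^{nt}[\lambda]$, which $p$ kills outright --- consistently with the fact that the induced operator $\mathcal{D}_i$ in \eqref{diffi} contains no $\partial/\partial u^i$ term at all. So your effort is aimed at a non-issue, while the genuine crux ($\hat{d}_i$-closedness of the image) is asserted without argument; as it stands the proof is incomplete.
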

\begin{proof}
Acting on $\hcA[\lambda]$, one explicitly checks that each term of the operator $\hat\Delta_{0,1}$  as stated in Lemma \ref{diff} preserves the subspaces  $\hcC[\lambda]$ as well as $\hcC_{i}^{nt}[\lambda]$. Since $p$ is the identity on $\hcC[\lambda]$, it is preserved by $\Delta_{0,1}'$, too.

As noticed before, we have that $\Delta_{0,1} \Delta_{-1} + \Delta_{-1} \Delta_{0,1} =0$. Since $\Delta_{-1} = f^i (-\lambda+u^i) \hat{d}_i$ on $\hcC_i^{nt}$, we get that $\Delta_{0,1}$ sends $\hcC_i^{nt} \cap \ker \hat{d}_i = \hat{d}_i ( \hcC_i )$ to itself. Finally $p$ acts like the identity on such space, which is therefore preserved by $\Delta_{0,1}'$.
%Taking the $\theta^{1}$ and $u^{i,s\geq 1}$-component of degree $1$ resp. $-1$ of the equation $\Delta_{0,1} \Delta_{-1} + \Delta_{-1} \Delta_{0,1} =0$ derived above, we  see
%that $\Delta_{0,1}$ anticommutes with $\hat{d}_{i}$, and therefore maps ${\rm Im}(\hat{d}_{i})$ to itself. With this, the statement of the Lemma follows.
\end{proof}

This Lemma implies that we can compute the cohomology of $\Delta'_{0,1}$ on each of these $n+1$ spaces separately. Let us start from $\hcC[\lambda]$. 
\begin{lemma}
The cohomology of the complex $(\hcC[\lambda], \Delta'_{0,1})$ vanishes if the bi-degrees $(p,d)$ are not in the range
\begin{equation}
d=0,\dots ,n, \qquad p=d,\dots ,d+n .
\end{equation}
\end{lemma}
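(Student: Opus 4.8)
The plan is to prove the statement by a pure support argument on the bi-graded vector space $\hcC[\lambda]$: no property of the differential $\Delta_{0,1}'$ is needed beyond the fact that $(\hcC[\lambda],\Delta_{0,1}')$ is a complex (which the previous Lemma guarantees, $\Delta_{0,1}'$ mapping $\hcC[\lambda]$ into itself) and that cohomology is a subquotient. Indeed, I expect the graded pieces $\hcC^p_d[\lambda]$ to vanish identically outside the stated range, so that the claim about cohomology becomes automatic.

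First I would record the bi-degrees of the generators of $\hcC[\lambda]$. The algebra $\hcC=C^\infty(U)[[\theta_1^0,\dots,\theta_n^0,\theta_1^1,\dots,\theta_n^1]]$ is the exterior algebra over $C^\infty(U)$ on the $2n$ odd variables $\theta_i^0,\theta_i^1$, to which we adjoin the even variable $\lambda$. In the bi-grading $(p,d)$ (super degree, standard degree) used throughout, the functions in $C^\infty(U)$ and the variable $\lambda$ carry bi-degree $(0,0)$, each $\theta_i^0$ carries bi-degree $(1,0)$, and each $\theta_i^1$ carries bi-degree $(1,1)$. Thus $\hcC[\lambda]$ is a free $C^\infty(U)[\lambda]$-module whose basis consists of the square-free monomials in these $2n$ odd generators.

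Next I would count. Since the $\theta$'s are odd, every basis monomial is a product of $a$ distinct variables $\theta_i^0$ and $b$ distinct variables $\theta_j^1$, with $0\leq a\leq n$ and $0\leq b\leq n$; such a monomial has standard degree $d=b$ and super degree $p=a+b$. Hence a nonzero homogeneous component requires $0\leq d\leq n$ and $0\leq p-d\leq n$, i.e. $\hcC^p_d[\lambda]=0$ whenever $(p,d)$ lies outside the range $d=0,\dots,n$, $p=d,\dots,d+n$. Because the differential raises $(p,d)$ by $(1,1)$, the cohomology $H^p_d(\hcC[\lambda],\Delta_{0,1}')$ is a subquotient of $\hcC^p_d[\lambda]$, so it vanishes wherever the latter does. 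This proves the Lemma.

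I do not expect any genuine obstacle in this step: it is the easiest of the $n+1$ summands of \eqref{decB}, and its entire content is that the bound of Case~1 is forced already at the level of the graded pieces of $\hcC$. The real work — and, as anticipated in Remark~\ref{rmk:cases}, the source of Case~2 — lies instead in the remaining summands $\im(\hat{d}_i)[\lambda]/(u^i-\lambda)$, where the variables $u^{i,s}$ and $\theta_i^{s+1}$ with $s\geq1$ reappear, the support argument no longer suffices, and one must actually analyse the action of $\Delta_{0,1}'$.
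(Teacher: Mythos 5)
Your proof is correct and takes essentially the same route as the paper: both arguments are pure support counts, observing that $\hcC[\lambda]$ is spanned over $C^\infty(U)[\lambda]$ by monomials $\theta^0_{i_1}\cdots\theta^0_{i_k}\theta^1_{j_1}\cdots\theta^1_{j_\ell}$, whose bi-degrees satisfy $0\leq d\leq n$ and $d\leq p\leq d+n$, so that the cohomology vanishes outside this range simply because the graded pieces do. Your explicit remark that nothing about $\Delta_{0,1}'$ is used beyond its homogeneity is exactly the (implicit) content of the paper's one-line proof.
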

\begin{proof}
The possible $(p,d)$-degrees of the elements of $\hat{\mathcal{C}}[\lambda]$ are those of monomials $\theta^0_{i_1}\cdots \theta^0_{i_k}\theta^1_{j_1}\cdots \theta^1_{j_\ell}$, $1\leq i_1<\cdots<i_k\leq n$, $1\leq j_1<\cdots<j_\ell\leq n$. So, for the standard gradation $d$ we have $0\leq d\leq n$, and, if we fix $d$, then for the super gradation $p$ we have $d\leq p\leq d+n$. 
%Without any further computation we see that these are the restrictions for the possible $(p,d)$-degrees of the cohomology of $\Delta'_{01}$.
\end{proof}

 Note that this Lemma gives  Case 1 in the statement of Remark~\ref{rmk:cases}. 

Now we want to estimate the cohomology of $\Delta'_{0,1}$ in each of the spaces $\im  \left( \hat{d}_i : \hcC_i \to \hcC_i \right)[\lambda]/(u^i-\lambda)$, $i=1,\dots,n$. For the rest of this Section the index $i$ is fixed and refers to the particular space that we consider. 

Obviously, the map $\pi_{\lambda-u^i}$ that sets $\lambda$ to $u^i$ defines an isomorphism between $\im  \left( \hat{d}_i : \hcC_i \to \hcC_i \right)[\lambda]/(u^i-\lambda)$ and $\im  \left( \hat{d}_i : \hcC_i \to \hcC_i \right)$. Let us denote by $\tilde{\Delta}_{0,1}'$ the operator induced by $\Delta_{0,1}'$ on $\hat{d}_i(\hcC_i)$ by such isomorphism and let us represent it in the following way:
$$
\tilde{\Delta}'_{0,1}  = \sum_{\substack{k=1}}^n \theta_{k}^1 \mathcal{D}_k,
$$
where
% checked again from \Delta'_{0,1} 8/9 ok
\begin{align}
\mathcal{D}_k:=&  
(u^k-u^i)f^k\frac{\d}{\d u^k} 
- \sum_{j=1}^n
(u^k-u^i)
f^k\frac{\d_k f^j}{f^j} \theta_j^1
\frac{\d}{\d \theta_j^1}
\\ \notag &
+\sum_{\substack{
		s \geq 1
	}}  \frac{s+2}{2}
	f^k u^{k,s} 
	\frac{\d}{\d u^{k,s}}
	-\frac{1}{2}\sum_{\substack{
			s \geq 1
		}}
		\sum_{j=1}^n
		(u^k-u^i) s 
		f^k\frac{\d_k f^j}{f^j} u^{j,s}
		\frac{\d}{\d u^{j,s}}
		\\ \notag &
		+ \frac{1}{2} \sum_{\substack{
				s\geq 2
			}}  
			f^k  (s-1) \theta_k^{s}
			\frac{\d}{\d \theta_k^s}
	%		\\ \notag &
			- \frac{1}{2} \sum_{\substack{
					s\geq 2
				}}  
				\sum_{j=1}^n
				(u^k-u^i)
				f^k\frac{\d_k f^j}{f^j} ( s+1) \theta_j^s
				\frac{\d}{\d \theta_j^s}
				\\ \notag
				& - \frac{1}{2} 
				\sum_{j=1}^n
				(u^k-u^i)
				\d_jf^k   \theta_k^{0}
				\frac{\d}{\d \theta_j^0}
		%		\\ \notag &
				-\frac{1}{2}  
				f^k  \theta_k^{0}
				\frac{\d}{\d \theta_k^0}
				\\ \notag
				&
				-
				\frac{1}{2} \sum_{j=1}^n  
				(u^k-u^i)
				f^k\frac{\d_k f^j}{f^j} \theta_j^0
				\frac{\d}{\d \theta_j^0}
		%		\\ \notag &
				+
				\frac{1}{2} \sum_{j=1}^n
				(u^j-u^i)
				f^j\frac{\d_j f^k}{f^k}  \theta_j^0
				\frac{\d}{\d \theta_k^0}.
				\end{align}
In particular
% checked again from previous formula 8/9 ok
\begin{align}
\label{diffi}
\mathcal{D}_i& =  
				\sum_{\substack{
						s \geq 1
					}}  \frac{s+2}{2}
					f^i u^{i,s} 
					\frac{\d}{\d u^{i,s}}
					+ \sum_{\substack{
							s\geq 2
						}}  \frac{s-1}2
						f^i   \theta_i^{s}
						\frac{\d}{\d \theta_i^s}
						\\ \notag
						&-\frac{1}{2}  
						f^i  \theta_i^{0}
						\frac{\d}{\d \theta_i^0}
						+
						\frac{1}{2} \sum_{j=1}^n
						(u^j-u^i)
						f^j\frac{\d_j f^i}{f^i}  \theta_j^0
						\frac{\d}{\d \theta_i^0} .
\end{align}
As a first step towards the computation of the cohomology of $\tilde{\Delta}'_{0,1}$ on the space $\hcB_i:=\hat{d}_i(\hcC_i)$, we introduce the decreasing filtration $F \hcB_i$ associated with the degree $\deg_{\theta_i^1}-\deg_\theta$, in the same way as we did for the filtration $F \hcB$ before. In this case $\deg_{\theta^1_i}$ is the degree that counts the number of $\theta^1_i$ for {\it fixed} $i$. Denote by $(E''_k ,d''_k)$ the associated spectral sequence. Since $\deg_{\theta_i^1} \mathcal{D}_k =0$, the zeroth page is given by $\hcB_i$ with the differential induced by $\theta^1_i \mathcal{D}_i$, i.e.,
\begin{equation}
(E''_0,d''_0) =(\hcB_i, \theta_i^1 \mathcal{D}_i ) .
\end{equation}
% ok checked everything up to the end 17/9
We now proceed to find the first page of the spectral sequence by computing the cohomology of this complex. We will find restrictions on the possible $(p,d)$-degrees at which the cohomology can be non-trivial, which will be sufficient to complete the proof of our vanishing Theorem~\ref{thm:A}.
\begin{proposition}
The cohomology of the differential $\theta_i^1 \mathcal{D}_i$ on the space 
$\hcB_i=\hat{d}_i (\hcC_i)$ vanishes,
\[
H^p_d\left( \hcB_i,\theta_i^1 \mathcal{D}_i\right)=0,
\]
unless $d=2,3,\ldots,n+2,~p=d,d+1,\ldots,d+n-1$.
\end{proposition}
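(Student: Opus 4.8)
The plan is to compute the cohomology of the complex $(\hcB_i, \theta_i^1 \mathcal{D}_i)$ by recognizing $\mathcal{D}_i$ as an essentially ``diagonal'' operator that scales each generator by a rational multiple, and then by applying yet another homotopy/eigenspace argument. Concretely, observe from formula~\eqref{diffi} that $\mathcal{D}_i$ acts on the generators $u^{i,s}$ ($s\geq1$) with eigenvalue $\tfrac{s+2}{2} f^i$, on $\theta_i^s$ ($s\geq2$) with eigenvalue $\tfrac{s-1}{2} f^i$, and on $\theta_i^0$ with eigenvalue $-\tfrac12 f^i$ (up to the off-diagonal $\theta_j^0$-correction, which only couples the $\theta^0$-sector). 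The point is that $\theta_i^1 \mathcal{D}_i$ raises $\deg_{\theta_i^1}$ by exactly one, so on a monomial not containing $\theta_i^1$ the differential multiplies by the total $\mathcal{D}_i$-eigenvalue and appends $\theta_i^1$, while on a monomial already containing $\theta_i^1$ it gives zero (since $(\theta_i^1)^2=0$). Thus the complex splits as a two-term complex in the $\theta_i^1$-direction, and the cohomology is controlled by the kernel and cokernel of multiplication by the scalar eigenvalue of $\mathcal{D}_i$ on the subspace with no $\theta_i^1$.

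First I would make this precise by writing every element of $\hcB_i$ uniquely as $a + \theta_i^1 b$ with $a,b$ free of $\theta_i^1$, so that $\theta_i^1\mathcal{D}_i(a+\theta_i^1 b) = \theta_i^1 \mathcal{D}_i a$. The cohomology is then $\ker/\im$ of the map $a\mapsto \mathcal{D}_i a$ between the $\theta_i^1$-free part and the $\theta_i^1$-linear part. Since $f^i$ is a nonvanishing function, multiplication by $f^i$ is invertible on $C^\infty(U)$-modules, so the only way $\mathcal{D}_i a$ can vanish (or fail to be surjective) is when the combinatorial eigenvalue degenerates. The eigenvalues $\tfrac{s+2}{2}$, $\tfrac{s-1}{2}$, $-\tfrac12$ are all nonzero except precisely for $\theta_i^1$ itself (eigenvalue $0$, at $s=1$), which however has been removed since $\hcB_i = \hat d_i(\hcC_i)$ contains $\theta_i^{s+1}$ only for $s\geq1$, i.e. the generator $\theta_i^1$ does not appear as a differentiation variable in $\mathcal{D}_i$. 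The surviving contributions to cohomology therefore come only from monomials whose total $\mathcal{D}_i$-eigenvalue vanishes, which forces the $\theta^0$-sector to carry all the weight and constrains the $(p,d)$-degrees.

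Next I would carefully bookkeep the degree constraints. Using the restriction~\eqref{ineqqq} on admissible $(p,d)$ coming from the odd variables $\theta_i^s$, together with the fact that $\hcB_i \subseteq \hcC_i^{nt}$ forces at least one generator $u^{i,s}$ or $\theta_i^{s+1}$ with $s\geq1$ to be present (contributing at least $d\geq2$ since $\hat d_i$ produces a $\theta_i^{s+1} u^{i,s}$-type pair), I would derive the lower bound $d\geq2$ and the two-sided bound $p=d,\dots,d+n-1$. The $\theta^0$-variables range over $\theta_1^0,\dots,\theta_n^0$, giving at most $n$ factors and hence $p-d\leq n-1$ after accounting for the forced positive-degree generator; the upper bound $d\leq n+2$ should emerge from combining the Poincar\'e-lemma reduction (which kills high $u^{i,s}$ towers) with the eigenvalue-nonvanishing argument, leaving only finitely many surviving standard degrees.

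The hard part will be the $\theta^0$-sector, where $\mathcal{D}_i$ is \emph{not} diagonal: the last term of~\eqref{diffi}, $\tfrac12\sum_j (u^j-u^i) f^j \tfrac{\d_j f^i}{f^i}\theta_j^0 \tfrac{\d}{\d\theta_i^0}$, mixes $\theta_i^0$ with the other $\theta_j^0$. I expect to handle this by a triangularity argument: since this off-diagonal term carries a factor $(u^j-u^i)$ and maps $\theta_i^0$ to $\theta_j^0$ with $j\neq i$, one can treat it as a lower-order perturbation with respect to a suitable grading (for instance $\deg_{\theta_i^0}$) and argue that it does not create new cohomology beyond what the diagonal part allows, or absorb it into the homotopy. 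Establishing that this mixing does not enlarge the non-vanishing range — i.e. that the cohomology still sits inside $d=2,\dots,n+2$, $p=d,\dots,d+n-1$ — is the genuinely delicate combinatorial step, and I would devote the bulk of the argument to controlling it via a careful spectral-sequence or explicit-homotopy computation on the finite-dimensional $\theta^0$-exterior-algebra factor.
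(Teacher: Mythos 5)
Your strategy is in essence the same as the paper's: split off the $\theta_i^1$-direction so that the cohomology becomes the kernel and cokernel of $\mathcal{D}_i$ on the $\theta_i^1$-free part, exploit that $\mathcal{D}_i$ is diagonal on monomials up to the $\theta_j^0$-mixing term, with generator eigenvalues $\tfrac{s+2}{2}f^i$, $\tfrac{s-1}{2}f^i$, $-\tfrac12 f^i$, and tame the mixing by filtering on $\theta_i^0$-content. The paper packages the diagonal part as a weight $w$ with $w(u^{i,s})=(s+2)/2$, $w(\theta_i^s)=(s-1)/2$, uses the identity $[\mathcal{D}_i,\hat{d}_i]=-f^i\hat{d}_i$ to check that $\hcB_i=\bigoplus_w \hcC\cdot\hat{d}_i(M_i^w)$ is a sum of subcomplexes (a point you silently assume: one must know $\theta_i^1\mathcal{D}_i$ preserves the image of $\hat{d}_i$ and acts there with a well-defined eigenvalue $f^i(w-1)$, $w\geq 3/2$), and then runs exactly your two-term argument weight by weight. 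Note also that the step you flag as ``genuinely delicate'' --- controlling the $\theta_j^0$-mixing --- is in fact a one-liner once you have your filtration: the monomials \emph{without} $\theta_i^0$ span a subcomplex on which the differential is multiplication by $f^i(w-1)\neq 0$ followed by $\theta_i^1$, hence acyclic; the mixing term maps the $\theta_i^0$-monomials \emph{into} that subcomplex, so on the quotient it disappears and the differential is multiplication by $f^i(w-\tfrac32)$. No spectral-sequence machinery on the $\theta^0$-exterior algebra is needed.

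The genuine gap is in your degree bookkeeping, i.e.\ in the part of the argument that actually produces the window $d=2,\dots,n+2$, $p=d,\dots,d+n-1$. Your proposed mechanism for the upper bound $d\leq n+2$ (``Poincar\'e-lemma reduction which kills high $u^{i,s}$ towers'') is not available here: there is no Poincar\'e lemma at this stage, the towers are killed precisely by the nonvanishing of the eigenvalue, and the bound comes from classifying the zero-total-eigenvalue monomials, which you never carry out. That classification is the crux and is short: every monomial of $\hcB_i$ carries positive weight at least $\tfrac12$, with the minimum attained only by $\theta_i^2=\hat{d}_i(u^{i,1})$, while the only negative contribution is a single $-\tfrac12$ from $\theta_i^0$ (it can occur at most once). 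Hence the total eigenvalue vanishes exactly on the $C^\infty(U)$-span of $\theta_i^0\theta_i^2\cdot\theta_{j_1}^0\cdots\theta_{j_\ell}^0\cdot\theta_{k_1}^1\cdots\theta_{k_m}^1$ with $j_a\neq i$, giving $d=2+m\in\{2,\dots,n+2\}$ and $p-d=\ell\in\{0,\dots,n-1\}$, which is precisely the stated range; without this explicit step your argument establishes acyclicity away from some undetermined locus but not the asserted $(p,d)$-window.
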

\begin{proof}					
In order to compute the cohomology of $\theta_i^1 \mathcal{D}_i$ on $\im  \left( \hat{d}_i : \hcC_i \to \hcC_i \right)$, we represent this space as a direct sum of subcomplexes indexed by some auxiliary gradation. That is, we consider all possible non-constant polynomials in $u^{i,\geq1}$ and $\theta_i^{\geq 2}$, and we represent this space as a direct sum 
$$
\mathbb{C}[u^{i,\geq1},\theta_i^{\geq 2}]=\bigoplus_{w\in \frac 12\Z} M_i^w,
$$ 
where $M_i^w\subset \mathbb{C}[u^{i,\geq1},\theta_i^{\geq 2}]$ is the subspace of weighted homogeneous polynomials of the weight $w$, where the weight $w$ is defined on generators as $w(u^{i,s})=(s+2)/2$, $s=1,2,\dots$, and $w(\theta_i^s)=(s-1)/2$, $s=2,3,\dots$. 

\begin{lemma}
There is an isomorphism
\[
%\im  \left( \hat{d}_i : \hcC_i \to \hcC_i \right) 
\hcB_i \cong \bigoplus_{w\in \frac 12 \Z}  \hat{\mathcal{C}} \cdot \hat{d}_i(M_i^w) ,
\]
where each of the summands is preserved by the differential $\theta_i^1\mathcal{D}_i$. Therefore the cochain complex $(\hcB_i,\theta^1_i\mathcal{D}_i)$ decomposes into a direct sum of subcomplexes indexed by all possible $w\in\frac12\Z$.
%the variables $u^{i,\geq1}$ and $\theta_i^{\geq 2}$.
\end{lemma}
\begin{proof}
A short computation shows that
\[
[\mathcal{D}_i,\hat{d}_i]=- f^i\hat{d}_i.
\]
Consider an element $\mathfrak{m}\in M_i^w$. 
It follows from the identity above that
\begin{align*}
\theta^1_i\mathcal{D}_i(\hcC\hat{d}_i(\mathfrak{m}))
& \subset(\theta^1_i\mathcal{D}_i(\hcC))\cdot\hat{d}_i(\mathfrak{m})+\hcC\cdot\theta^1_i\mathcal{D}_i(\hat{d}_i(\mathfrak{m}))\\
&=(\theta^1_i\mathcal{D}_i(\hcC))\cdot\hat{d}_i(\mathfrak{m})+\hcC\cdot \theta^1_i \hat{d}_i(\mathcal{D}_i-f^i)(\mathfrak{m}).
\end{align*}
We see from equation \eqref{diffi} that $\theta^1_i\mathcal{D}_i(\hcC)\subset\hcC$
and that $(\mathcal{D}_i-f^i)$ acts on $\mathfrak{m}$ by multiplication by a scalar and by $f^i$. 
This shows that $\theta^1_i\mathcal{D}_i$ preserves the subspace $ \hat{\mathcal{C}} \cdot \hat{d}_i(\mathfrak{m})$, and, therefore, the space 
$ \hat{\mathcal{C}} \cdot \hat{d}_i(M_i^w)$ for any $w\in \frac12 \Z$.
\end{proof}
 Because of this Lemma, we can consider an infinite direct sum of complexes, each of which is a finite dimensional module over $\hcC$. Note that $\hat{d}_i(M_i^w)$ is equal to $0$ for $w<3/2$, so we assume that $w\geq 3/2$ in the rest of the proof.
 %isomorphic, as a bigraded vector space, to $\hcC$ with shifted $(p,d)$-gradation. The gradation is shifted by $(p_\mathfrak{m}+1,d_{\mathfrak{m}}+1)$, where $(p_\mathfrak{m},d_{\mathfrak{m}})$ in the gradation of $\mathfrak{m}\in M$ (and, therefore, $(p_\mathfrak{m}+1,d_{\mathfrak{m}}+1)$ is the gradation of $\hat{d}_i(\mathfrak{m})$).
 
Let us discuss the action of $\theta_i^1\mathcal{D}_i$. Observe that this operator is linear over the ring of functions in $u^1,\dots,u^n$ and $\theta_j^1$, $j\not=i$. So, we omit the coefficients from this ring in the computations below, assuming that there can be an arbitrary coefficient that would be preserved.

Note that the eigenvalue of the operator
$$
\sum_{\substack{
		s \geq 1
	}}  \frac{s+2}{2}
	 u^{i,s} 
	\frac{\d}{\d u^{i,s}}
	+ \sum_{\substack{
			s\geq 2
		}}  \frac{s-1}2
		  \theta_i^{s}
		\frac{\d}{\d \theta_i^s}
$$
on $\hat{d}_i(M_i^w)$ is equal to $w-1$. Then the eigenvalue of $\mathcal{D}_i$ on $\hat{d}_i(M_i^w)$ is $f^i (w-1)$. Observe that $w-1$ is always a positive half-integer, and the minimal value of $w-1$ is equal to $1/2$ for $w=3/2$ and in this case $\hat{d}_i(M_i^{3/2})=\langle \theta_i^{2}\rangle$. 

Consider monomials $\theta_{j_1}^0\cdots\theta_{j_\ell}^0$ such that $1\leq j_1<\ldots<j_\ell\leq n$, and $j_k\not=i$ for all $k=1,\dots,\ell$. Let $\mathfrak{m}\in M_i^w$. We have that
\begin{equation*}
\theta_i^1\mathcal{D}_i\colon \theta_{j_1}^0\cdots\theta_{j_\ell}^0 \hat{d}_i(\mathfrak{m})
\mapsto f^i(w-1)\theta_i^1 \theta_{j_1}^0\cdots\theta_{j_\ell}^0 \hat{d}_i(\mathfrak{m}) .
\end{equation*}
So, since $w-1\not=0$, we see that the subspace of $\hat{\mathcal{C}} \cdot \hat{d}_i(M_i^w)$ spanned by the elements $\theta_{j_1}^0\cdots\theta_{j_\ell}^0 \hat{d}_i(\mathfrak{m})$ and $\theta_i^1 \theta_{j_1}^0\cdots\theta_{j_\ell}^0 \hat{d}_i(\mathfrak{m})$ forms an acyclic subcomplex of $\hat{\mathcal{C}} \cdot \hat{d}_i(M_i^w)$.

Now we consider monomials $\theta_{i}^0\theta_{j_1}^0\cdots\theta_{j_\ell}^0$ such that $1\leq j_1<\cdots<j_\ell\leq n$, and $j_k\not=i$ for all $k=1,\dots,\ell$. Let $\mathfrak{m}\in M_i^w$. Modulo the acyclic subcomplex that we introduced in the previous paragraph, we have that
\begin{equation*}
\theta_i^1\mathcal{D}_i\colon \theta_i^0\theta_{j_1}^0\cdots\theta_{j_\ell}^0 \hat{d}_i(\mathfrak{m})
\mapsto f^i\left(w-\frac{3}{2}\right) \theta_i^1\theta_i^0 \theta_{j_1}^0\cdots\theta_{j_\ell}^0 \hat{d}_i(\mathfrak{m}).
\end{equation*}
Note that $w-3/2$ is equal to zero only if  $w=3/2$, and, therefore, $\hat{d}_i(\mathfrak{m})\not=\theta_i^2$. Thus, if  $w>3/2$, the quotient of $\hat{\mathcal{C}} \cdot \hat{d}_i(M_i^w)$ modulo an acyclic subcomplex is an acyclic subcomplex, and so $\hat{\mathcal{C}} \cdot \hat{d}_i(M_i^w)$ is acyclic. 

The only possible case when we can have non-trivial cohomology is the case of $w=3/2$, that is, the case of the complex $\hat{\mathcal{C}} \cdot \theta_i^2$. In this case, after taking the quotient modulo the acyclic subcomplex, the cohomology of $\theta_i^1\mathcal{D}_i$ is represented by a product of $\theta_i^0\theta_i^2$ by an arbitrary function in $u^1,\dots,u^n$, $\theta^0_1,\dots,\theta^0_n$ ($\theta_i^0$ is omitted), and $\theta^1_1,\dots,\theta^1_n$. This means that we have non-trivial cohomology only for gradation $d=2,3,\dots,2+n$, and once we fixed $d$, the possible values of gradation $p$ are $d,d+1,\dots,d+n-1$. This proves the Proposition.
\end{proof}
\begin{proof}[Proof of Theorem~\ref{thm:A}]
Recall from Remark \ref{rmk:cases} that we have to prove the vanishing of the bihamiltonian 
cohomology, except for the two cases 1 and 2 specified in that remark.
The computations in this subsection show that the cohomology of $\Delta_{01}'$ vanishes, unless
$d=0,1,\dots,n$; $p=d,d+1,\dots,d+n$ (Case 1, coming from the sub-complex $\hcC[\lambda]$), or 
$d=2,3,\dots,n+2$, $p=d,d+1,\dots,d+n-1$ (Case 2, coming from the sub-complex $\hat{d}_i(\hcC)$). 
Going back via the second to the first spectral sequence, we conclude the vanishing of the cohomology 
groups of the complex $(\hcA[\lambda],\Dla)$ in the same $(p,d)$-degrees: this is
exactly the statement of Theorem~\ref{thm:A}.
\end{proof}

\section*{Acknowledgments}
The authors would like to thank P.~Lorenzoni for useful discussions and remarks, and Y.~Zhang for a careful and detailed reading of the manuscript. This work was supported by the Netherlands Organization for Scientific Research.

\appendix

\end{document}